\documentclass[12pt]{amsart}
\usepackage[hmarginratio=1:1]{geometry} 
\geometry{letterpaper}
\usepackage{graphicx}
\usepackage{amssymb}

\sloppy
\newcommand{\hidden}[1]{}

\usepackage[numbers]{natbib}

\usepackage{amsmath}
\usepackage{amscd}
\usepackage{amsfonts}
\usepackage{amssymb}
\usepackage{amsthm}
\usepackage{cases}		 
\usepackage{cutwin}
\usepackage{enumerate}
\usepackage{graphicx}
\usepackage{hyperref}
\usepackage{ifthen}
\usepackage{lipsum}
\usepackage{mathrsfs}	
\usepackage{mathtools}
\usepackage{multimedia}
\usepackage{url}
\usepackage{wrapfig}
\usepackage{xcolor}

\newcommand{\hide}[1]{}
\newcommand*{\bms}[1]{{\boldsymbol{#1}}}
\newcommand*{\bm}[1]{{\bf #1}}
\newcommand*{\A}{\bm{A}}
\newcommand*{\C}{\bm{C}}
\newcommand*{\X}{\bm{X}}
\newcommand*{\Y}{\bm{Y}}
\newcommand*{\spr}{r}
\newcommand*{\x}{\bm{x}}
\newcommand*{\cv}{\bm{c}}
\newcommand*{\0}{{\bm 0}}
\newcommand*{\Q}{{\bm Q}}

\newcommand*{\tr}{^{\!\top}}
\newcommand*{\B}{\bm{B}}

\newcommand*{\F}{\bm{F}}
\newcommand*{\K}{\bm{K}}
\newcommand*{\Lam}{\bms{\Lambda}}

\newcommand*{\Pfrac}[2]{\left(\frac{#1}{#2}\right)}

\newcommand*{\Complex}{\mathbb{C}}
\newcommand{\set}[1]{\left\{ \, #1 \, \right\} }

\newcommand*{\suchthat}{\colon}
\newcommand*{\I}{\bm{I}}
\newcommand{\an}[1]{\begin{align}#1\end{align}}	
\newcommand{\ab}[1]{\begin{align*}#1\end{align*}}	

\newcommand*{\eqdef}{\, {:=}\, }
\newcommand*{\Reals}{\mathbb{R}} 

\newcommand*{\ev}{\bm{e}}

\newcommand*{\stext}{\shortintertext}	
\renewcommand{\S}{\bm{S}}
\newcommand*{\goesto}{\rightarrow}

\newcommand*{\df}[2]{\displaystyle \frac{{\rm d} #1}{{\rm d} #2}}
\newcommand*{\ddf}[2]{\displaystyle \frac{{\rm d}^2 #1}{{\rm d} #2^2}}
\newcommand*{\Pmatr}[1]{\begin{pmatrix} #1\end{pmatrix}}	
\newcommand{\Cases}[1]{\left\{ \begin{array}{ll} \displaystyle #1 \end{array}\right.}  

\newcommand*{\dfinline}[2]{{\rm d} #1 /{\rm d} #2}
\newcommand*{\ep}{\epsilon}
\newcommand*{\Order}{\mathcal{O}}

\newtheorem{Theorem}{Theorem}
\newtheorem{Proposition}[Theorem]{Proposition}
\newtheorem{Lemma}[Theorem]{Lemma}
\newtheorem{Corollary}[Theorem]{Corollary}

\renewcommand{\citet}{\cite}
\renewcommand{\citep}{\cite}

\title{
Nonconcavity of the Spectral Radius \\in Levinger's Theorem}
\author {Lee Altenberg
\\
Information and Computer Sciences, \\
University of Hawai`i at M\=anoa \\
\MakeLowercase{altenber@hawaii.edu}
\\
\\
and
\\
\\
Joel E. Cohen
\\
Laboratory of Populations, \\
Rockefeller University \& Department of Statistics, Columbia University, New York \\
Department of Statistics, University of Chicago \\
\MakeLowercase{cohen@rockefeller.edu}
}

\date{\today. \emph{Linear Algebra and Its Applications} \href {https://doi.org/10.1016/j.laa.2020.07.028} {doi.org/10.1016/j.laa.2020.07.028}
} 

\begin{document}
\begin{abstract}
Let $\A \in \Reals^{n \times n}$ be a nonnegative irreducible square matrix and let $r(\A)$ be its spectral radius and Perron-Frobenius eigenvalue.   Levinger asserted and several have proven that $r(t):=\spr((1{-}t) \A + t \A\tr)$ increases over $t \in [0,1/2]$ and decreases over $t \in [1/2,1]$.  It has further been stated that $r(t)$ is concave over $t \in (0,1)$.   Here we show that the latter claim is false in general through a number of counterexamples, but prove it is true for $\A \in \Reals^{2\times 2}$, weighted shift matrices (but not cyclic weighted shift matrices), tridiagonal Toeplitz matrices, and the 3-parameter Toeplitz matrices from Fiedler, but not Toeplitz matrices in general.  A general characterization of the range of $t$, or the class of matrices, for which the spectral radius is concave in Levinger's homotopy remains an open problem.
\end{abstract}
\maketitle
\pagestyle{myheadings}
\markboth{L. Altenberg \& J. E. Cohen}{Nonconcavity of the Spectral Radius in Levinger's Theorem}
\centerline{\emph{Dedicated to the memory of Bernard Werner Levinger (1928--2020)}}
\ \\
\noindent Keywords: 
circuit matrix, convexity, direct sum, homotopy, nonuniform convergence, skew symmetric
\\
MSC2010: 15A18, 15A42, 15B05, 15B48, 15B57 
\section{Introduction}

The variation of the spectrum of a linear operator as a function of variation in the operator has been extensively studied, but even in basic situations like a linear homotopy $(1{-}t) \X + t \Y$ between two matrices $\X, \Y$, the variational properties of the spectrum have not been fully characterized.  We focus here on Levinger's theorem about the spectral radius over the convex combinations of a nonnegative matrix and its transpose, $(1{-}t) \A + t \A\tr$.

We refer to $\B(t) = (1{-}t) \A + t \A\tr$, $t \in [0,1]$, as \emph{Levinger's homotopy},\footnote{Also called Levinger's transformation \citet{Psarrakos:and:Tsatsomeros:2003:Perron}.} and the spectral radius of Levinger's homotopy as \emph{Levinger's function} $r(t) \eqdef \spr(\B(t)) = \spr((1{-}t) \A + t \A\tr)$.

On November 6, 1969, the \emph{Notices of the American Mathematical Society} received a three-line abstract from Bernard W. Levinger for his talk at the upcoming AMS meeting, entitled ``An inequality for nonnegative matrices.''\citet{Levinger:1970:Inequality}  We reproduce it in full:

``\underline{Theorem.} Let $A \ge 0$ be a matrix with nonnegative components. Then $f(t) = p(tA + (1{-}t)A^T)$ is a monotone nondecreasing function of $t$, for $0 \le t \le 1/2$, where $p(C)$ denotes the spectral radius of the matrix $C$. This extends a theorem of Ostrowski. The case of constant $f(t)$ is discussed.''

Levinger presented his talk at the Annual Meeting of the American Mathematical Society at San Antonio in January 1970. Miroslav Fiedler and Ivo Marek were also at the meeting \citep{Marek:1974:Inequality}.  Fiedler developed an alternative proof of Levinger's theorem and communicated it to Marek \citep{Marek:1978:Perron}. Fiedler did not publish his proof until 1995 \citep{Fiedler:1995:Numerical}.  Levinger appears never to have published his proof.

Marek \citet{Marek:1978:Perron,Marek:1984:Perron} published the first proofs of Levinger's theorem, building on Fiedler's ideas to generalize it to operators on Banach spaces.  Bapat \citet{Bapat:1987:Two} proved a generalization of Levinger's theorem for finite matrices.  He showed that a necessary and sufficient condition for non-constant Levinger's function is that $\A$ have different left and right normalized (unit) eigenvectors (\emph{Perron vectors}) corresponding to the Perron-Frobenius eigenvalue (\emph{Perron root}).

Fiedler \citet{Fiedler:1995:Numerical} proved also that Levinger's function $r(t)$ is concave in some open neighborhood of $t=1/2$, and strictly concave when $\A$ has different left and right normalized Perron vectors.  The extent of this open neighborhood was not elucidated.

Bapat and Raghavan \citet[p.~121]{Bapat:and:Raghavan:1997} addressed the concavity of Levinger's function in discussing ``an inequality due to Levinger, which essentially says that for any $\A \ge 0$, the Perron root, considered as a function along the line segment joining $\A$ and $\A\tr$, is concave.'' The inference about concavity would appear to derive from the theorem of \citet[Theorem 3]{Bapat:1987:Two} that $\spr(t \, \A + (1{-}t)\B\tr) \geq t\, r(\A) + (1{-}t)\, r(\B)$ for all $t \in [0, 1]$, when $\A$ and $\B$ have a common left Perron vector and a common right Perron vector.  The same concavity conclusion with the same argument appears in \citet[Corollary 1.17]{Stanczak:Wiczanowski:and:Boche:2009:Fundamentals}.

However, concavity over the interval $t \in [0, 1]$ would require that 
for all $t, h_1, h_2 \in [0, 1]$, $r(t \, \F(h_1) + (1{-}t) \F(h_2) ) \geq t\, r(\F(h_1)) + (1{-}t) r(\F(h_2))$, 
where $\F(h) \eqdef h \, \A + (1{-} h)\B\tr$.  
While Theorem 3.3.1 of \citet{Bapat:and:Raghavan:1997} proves this for $h_1 = 1$ and $h_2 = 0$, it cannot be extended generally to $h_1, h_2 \in (0,1)$ because $\F(h_1)$ and $\F(h_2)\tr$ will not necessarily have common left eigenvectors and common right eigenvectors.  

Here, we show that the concavity claim is true for $2 \times 2$ and other special families of matrices.  We also show that for each of these matrix families, counterexamples to concavity arise among matrix classes that are ``close'' to them, in having extra or altered parameters.  Table \ref{Table:Comparison} summarizes our results.

\begin{table}[ht] \label{Table:Comparison}
\caption{Classes of nonnegative matrices with concave Levinger's function (left), and matrix classes ``close'' to them with nonconcave Levinger's function (right).}
{\small 
\begin{tabular}{|lr|lr|}
\hline
{\bf Concave} & &{\bf Nonconcave} &\\
\hline
$2 \times 2$ &\!\!\!\!Theorem \ref{Theorem:2x2}& $3 \times 3$, $4 \times 4$ &\!\!\!\!\!\!\!\!\!\!\!\!\!\!\!\!\!\!\!\!Eqs.\ \eqref{eq:Ex1}, \eqref{eq:4x4} \\
Tridiagonal Toeplitz &\!\!\!\!Theorem \ref{Theorem:Tridiag} & 4-parameter Toeplitz &Eq.\ \eqref {eq:ToeplitzConvex}\\
Fiedler's 3-parameter Toeplitz &\!\!\!\!Theorem \ref{Theorem:FiedlerLevinger} & 4-parameter Toeplitz &Eq.\ \eqref {eq:ToeplitzConvex}\\
$n \times n$ weighted shift matrix &\!\!\!\!Theorem \ref{Theorem:Shift} & $n \times n$ cyclic weighted shift matrix &Eq.\ \eqref {eq:CyclicShift16} \\
\hline
\end{tabular}
}
\end{table}

\section{Matrices that Violate Concavity}

\subsection{A Simple Example}
Let 
\an{
\A &= \Pmatr{
0&1&0\\
0&0&0\\
0&0&2/5
}\notag
\stext{to give}
\B(t) &= (1{-}t)\A+t\A\tr
= \Pmatr{
0&1{-}t&0\\
t&0&0\\
0&0&2/5}.  \label{eq:Ex1}
}
The eigenvalues of $\B(t)$ are $\{2/5, +\sqrt{t(1{-}t)}, -\sqrt{t(1{-}t)}\}$, plotted in Figure \ref{fig:Ex1}.  On the interval $t \in [1/5, 4/5]$, $\spr(\B(t)) = \sqrt{t(1{-}t)}$ is strictly concave.  On the intervals $t \in [0, 1/5]$ and $t \in [4/5,1]$, $\spr(\B(t))$ is constant.   It is clear from the figure that $\spr(\B(t))$ is not concave in the neighborhood of $t=1/5$ (and $t=4/5$), since for all small $\epsilon > 0$,
\an{\label{eq:Ineq1}
\frac{1}{2}[ \spr(\B(1/5 - \epsilon) + \spr(\B(1/5 + \epsilon) ]> \spr(\B(1/5 )) = 2/5.
}
By the continuity of the eigenvalues in the matrix elements \citep[2.4.9]{Horn:and:Johnson:2013}, we can make $\B(t)$ irreducible and yet preserve inequality \eqref{eq:Ineq1} in a neighborhood of $t=1/5$ by adding a small enough positive perturbation to each element of $\A$.
\begin{figure}[ht]
\centerline{\includegraphics[width=0.5\textwidth]{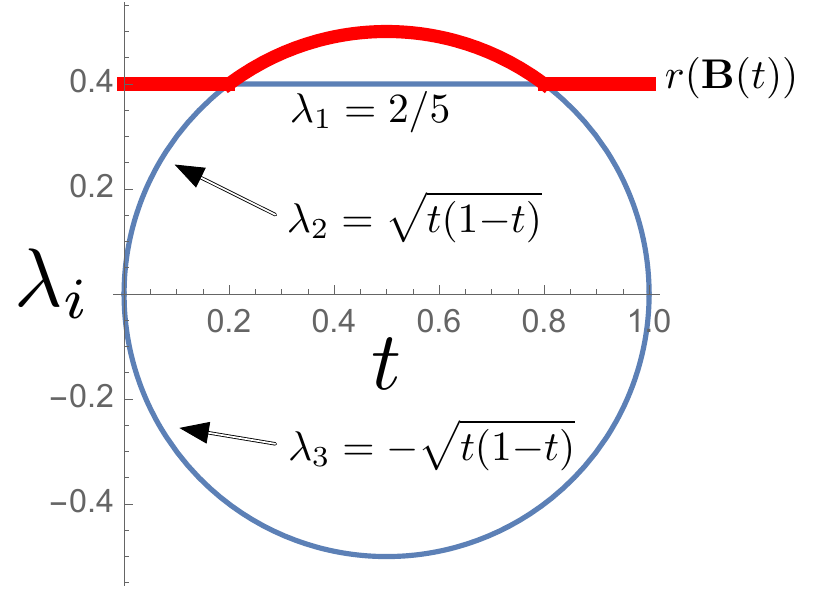}}
\caption{Eigenvalues of the matrix $\B(t)$ from \eqref{eq:Ex1}, $\lambda_1=2/5, \lambda_2= +\sqrt{t(1{-}t)}, \lambda_3= -\sqrt{t(1{-}t)}$, showing that the spectral radius $r(\B(t))$ (thick top line) is not concave around the points $t=0.2$ and $t=0.8$.}\label{fig:Ex1}
\end{figure}

The basic principle behind this counterexample is that the maximum of two concave functions need not be concave.   Here $\B(t)$ is the direct sum of two block matrices.  The eigenvalues of the direct sum are the union of the eigenvalues of the blocks, which are different functions of $t$.  One block has a constant spectral radius and the other block has a strictly concave spectral radius.  The spectral radius of $\B(t)$ is their maximum.

Another example of this principle is constructed by taking the direct sum of two $2 \times 2$ blocks, each of which is a Levinger homotopy of the matrix $\Pmatr{0&1\\0&0}$, 
but for values of $t$ at opposite ends of the {unit} interval, one $2\times 2$ block, $\A_1$, with $t_1 = 511/512$ and the other $2\times 2$ block, $\A_2$, with $t_2 =1/8$.  We take a weighted combination of the two blocks with weight $h$, $\A(h) = (1-h) \A_1 \oplus h \A_2$, to get:
\an{
\A(h) &=
\Pmatr{ 0 & (1{-}h) \frac{511}{512} & 0 & 0 \\
(1{-}h) \frac{1}{512} & 0 & 0 & 0 \\
 0 & 0 & 0 & h \frac{1}{8} \\
 0 & 0 & h\frac{7 }{8} & 0 } . \label{eq:4x4}
}
The eigenvalues of $\B(t,h) = (1{-}t) \A(h) + t \A(h)\tr$ are plotted in Figure \ref{fig:4x4}.  We see that there is a narrow region of $h$ below $h=0.5$ where the maximum eigenvalue switches from block 2 to block 1 and back to block 2 with increasing $t \in [0,1]$, making $\spr(\B(t,h)) = \spr((1{-}t) \A(h) + t \A(h)\tr)$ at $h=0.4$ nonconcave with respect to the interval $t \in [0,1] $.
\begin{figure}[th]
\centerline{\includegraphics[width=0.6\textwidth]{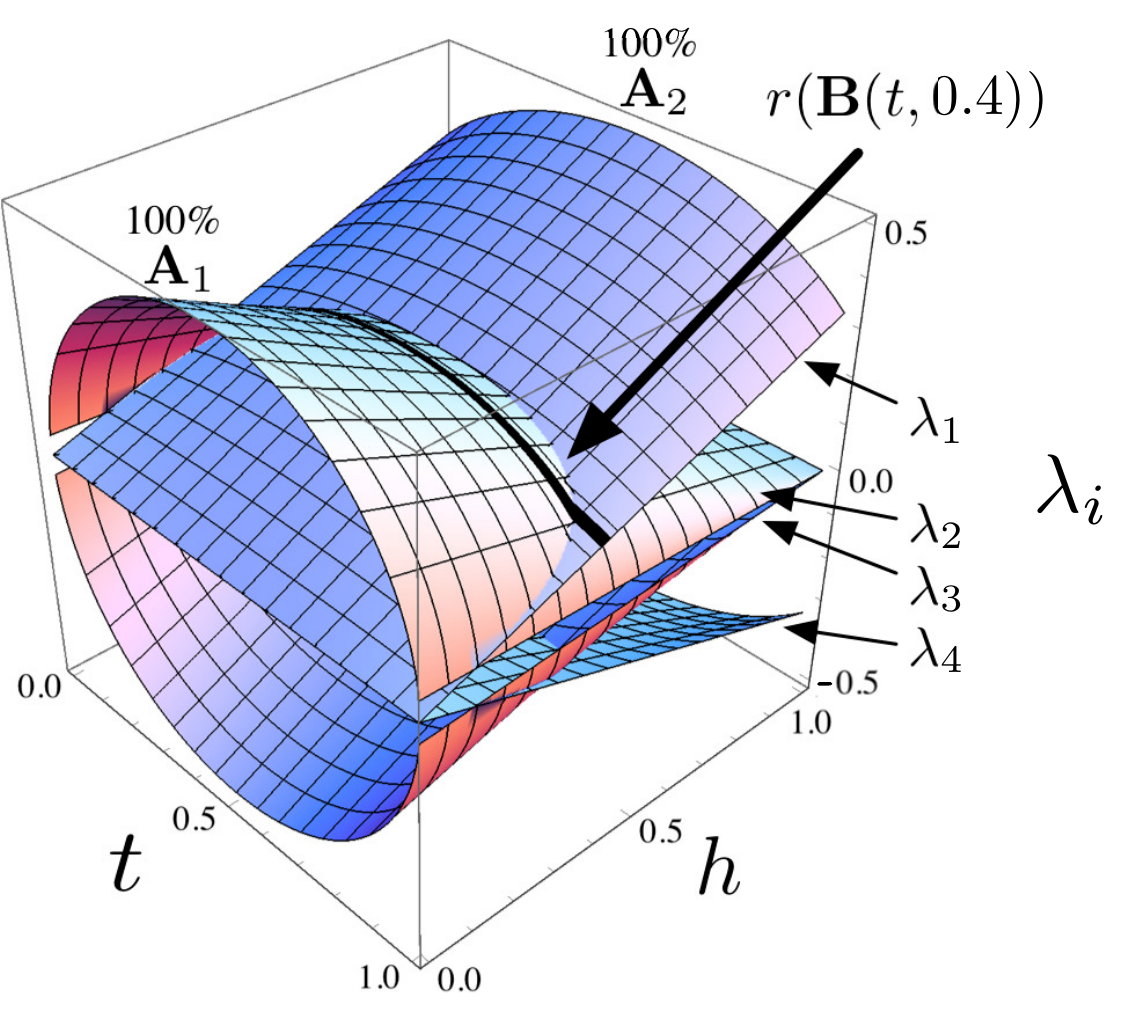}}
\caption{Eigenvalues of $\B(t,h)$ for a two-parameter homotopy: Levinger's homotopy $\B(t,h) = (1{-}t) \A(h) + t \A(h)\tr$, $t \in [0,1]$, and a second homotopy $\A(h)= (1{-}h)\A_1 \oplus h \A_2$, $h \in [0,1]$ \eqref{eq:4x4}. The dark band at $h=0.4$ is $r(\B(t,0.4))$, 
showing that the spectral radius is nonconcave in $t$ where it jumps between the two concave upper manifolds.}\label{fig:4x4}
\end{figure}
As in example \ref{eq:Ex1}, $\A(h)$ may be made irreducible by positive perturbation of the $0$ values without eliminating the nonconcavity.

The principle here may be codified as follows.

\begin{Proposition}
Let $\A = \A_1 \oplus \A_2 \in \Reals^{n \times n}$, where $\A_1$ and $\A_2$ are irreducible nonnegative square matrices.  
{Then}
$\spr(t) \eqdef \spr((1-t)\A+ t \A\tr)$ is not concave in $t \in (0,1)$ if there exists $t^* \in (0,1)$ such that 
\begin{enumerate}
\item $\spr((1-t^*)\A_1+ t^* \A_1\tr) = \spr((1-t^*)\A_2+ t^* \A_2\tr)$,\\ \ \\
{and} \ \\
\item $ \left. \df{}{t} \spr((1-t)\A_1+ t \A_1\tr)\right|_{t=t^*} 
  \neq \left. \df{}{t} \spr((1-t)\A_2+ t \A_2\tr)\right|_{t=t^*} $.
\end{enumerate}
\end{Proposition}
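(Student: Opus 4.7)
The plan is to exploit the direct-sum structure of $\A$ to reduce the Perron root of $\B(t) := (1{-}t)\A + t\A\tr$ to a pointwise maximum of two scalar functions, and then to show that this maximum must fail concavity at any crossing where the two functions have different slopes. Since $\A = \A_1 \oplus \A_2$ gives $\A\tr = \A_1\tr \oplus \A_2\tr$, Levinger's homotopy itself splits as $\B(t) = \B_1(t) \oplus \B_2(t)$, where $\B_i(t) := (1{-}t)\A_i + t\A_i\tr$. Because the spectrum of a direct sum is the union of the block spectra, we get $\spr(t) = \max\{r_1(t), r_2(t)\}$, with $r_i(t) := \spr(\B_i(t))$.

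Next I would check that $r_1$ and $r_2$ are differentiable at $t^*$ so that hypothesis (2) is meaningful. Each $\A_i$ is irreducible and nonnegative, so $\B_i(t)$ is irreducible and nonnegative for $t \in (0,1)$, and hence its Perron root $r_i(t)$ is a simple eigenvalue by the Perron-Frobenius theorem. Standard analytic perturbation theory for a simple eigenvalue (equivalently, the explicit derivative formula used by \citet{Bapat:1987:Two}) then guarantees that $r_i$ is real-analytic in a neighborhood of $t^*$.

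The core step is a first-order Taylor argument. Write $r^* := r_1(t^*) = r_2(t^*)$ and, without loss of generality, assume $r_1'(t^*) < r_2'(t^*)$. Because $\spr \geq r_i$ pointwise, for all sufficiently small $\epsilon > 0$ we have $\spr(t^* - \epsilon) \geq r_1(t^* - \epsilon) = r^* - r_1'(t^*)\,\epsilon + o(\epsilon)$ and $\spr(t^* + \epsilon) \geq r_2(t^* + \epsilon) = r^* + r_2'(t^*)\,\epsilon + o(\epsilon)$, whence
\[
\frac{1}{2}\bigl[\spr(t^* - \epsilon) + \spr(t^* + \epsilon)\bigr] \geq r^* + \frac{r_2'(t^*) - r_1'(t^*)}{2}\,\epsilon + o(\epsilon) > r^* = \spr(t^*)
\]
for $\epsilon$ small enough. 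This strict midpoint inequality directly contradicts concavity of $\spr$ on any subinterval of $(0,1)$ containing $t^*$.

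The only potential obstacle is the differentiability of the block Perron roots at $t^*$, which is immediate from Perron-Frobenius simplicity together with classical analytic perturbation theory; no delicate estimate is required. The hypothesis that the two derivatives disagree at the crossing is precisely what forces the pointwise maximum to have a convex kink, and the proposition thereby codifies the direct-sum mechanism already illustrated by the counterexamples in \eqref{eq:Ex1} and \eqref{eq:4x4}.
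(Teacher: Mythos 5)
Your proof is correct and follows essentially the same route as the paper's: split the direct sum into blocks, invoke analyticity of the simple Perron root to Taylor-expand each block's spectral radius at $t^*$, and show the midpoint inequality for concavity fails. The only (harmless) difference is that you lower-bound $\spr(t^*\mp\epsilon)$ by the block with the smaller/larger derivative rather than identifying the maximum exactly via $\min(s_1,s_2)$ and $\max(s_1,s_2)$ as the paper does; both yield the same strict violation.
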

\begin{proof}
Let $r^* \eqdef r(t^*) = \spr((1-t^*)\A_1+ t^* \A_1\tr) = \spr((1-t^*)\A_2+ t^* \A_2\tr)$.  Since the spectral radius of a nonnegative irreducible matrix is a simple eigenvalue by Perron-Frobenius theory, it is analytic in the matrix elements \citep[Fact 1.2]{Tsing:etal:1994:Analyticity}.  Thus for each of $\A_1$ and $\A_2$, Levinger's function is analytic in $t$, and therefore has equal left and right derivatives around $t^*$.  So we can set $s_1 = \dfinline{\spr((1-t)\A_1 +  t \A_1\tr)}{t}|_{t=t^*}$ and $s_2 = \dfinline {\spr((1{-}t)\A_2+ t \A_2\tr)}{t}|_{t=t^*}$.  Then
\ab{
\spr((1{-}t^* {-} \ep)\A_1+ (t^* {+} \ep) \A_1\tr) &= r^* + \ep s_1 + \Order(\ep^2), \\
\spr((1{-}t^* {-} \ep)\A_2+ (t^* {+} \ep) \A_2\tr) &= r^* + \ep s_2 + \Order(\ep^2).
}
For a small neighborhood around $t^*$,
\ab{
\spr(t^*{+}\ep) &= \spr( (1{-}t^* {-} \ep )\A+ (t^* {+} \ep) \A\tr) \\&
= \max\set{\spr((1{-}t^* {-} \ep)\A_1+ (t^* {+} \ep) \A_1\tr), \spr((1{-}t^* {-} \ep)\A_2+ (t^* {+} \ep) \A_2\tr)} \\&
= r^* + \Cases{
\ep \min(s_1, s_2) + \Order(\ep^2),	&\qquad \ep < 0, \\
\ep \max(s_1, s_2) + \Order(\ep^2),	&\qquad \ep > 0 .
}
}
A necessary condition for concavity is 
$
\frac{1}{2}(\spr(t^*{+} \ep) + \spr(t^*{-}\ep)) \leq \spr(t^*).
$
However, for small enough $\ep > 0$, letting $\delta = \max(s_1, s_2) - \min(s_1, s_2) > 0$,
\ab{
\frac{\spr(t^*{+} \ep) + \spr(t^*{-}\ep)}{2}
&= r^* + \ep \frac{\max(s_1, s_2) - \min(s_1, s_2) }{2} + \Order(\ep^2) \\
&= r^* + \ep \delta / {2} + \Order(\ep^2)
> r^* .
}
The condition for concavity is thus violated.
\end{proof}
\subsection{Toeplitz Matrices}

The following nonnegative irreducible Toeplitz matrix has a nonconcave Levinger's function:
\an{\label{eq:ToeplitzConvex}
\A&=
\Pmatr{
 5 & 0 & 6 & 0 \\
 1 & 5 & 0 & 6 \\
 0 & 1 & 5 & 0 \\
 8 & 0 & 1 & 5 
 }
}
A plot of Levinger's function for \eqref{eq:ToeplitzConvex} is not unmistakably nonconcave, so instead we plot the second derivative of $\spr(\B(t))$ in Figure \ref{fig:ToeplitzConvex}, which is positive at the boundaries $t=0$ and $t=1$, and becomes negative in the interior.
\begin{figure}[ht]
\centerline{\includegraphics[width=0.56\textwidth]{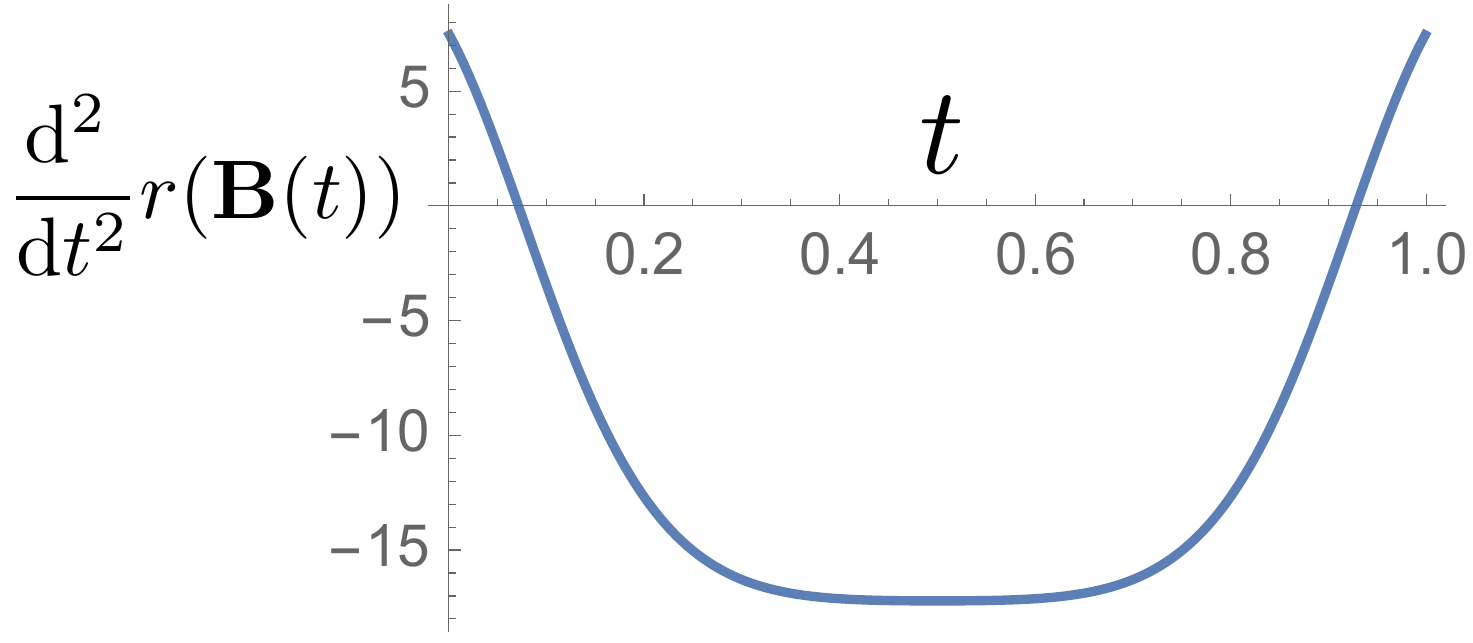}}
\caption{The second derivative of Levinger's function for the Toeplitz matrix \eqref{eq:ToeplitzConvex}.\label{fig:ToeplitzConvex}}
\end{figure}

\subsection{Weighted Circuit Matrices}

Another class of matrices where Levinger's function can be nonconcave is the weighted circuit matrix.  A weighted circuit matrix is an $n \times n$ matrix in which there are $k\in[1,n]$ distinct integers $i_1, i_2, \ldots, i_k \in \{1, 2, \ldots, n\}$ such that all elements are zero except weights $c_j$, $j = 1, \ldots, k$, at matrix positions $(i_1, i_2), (i_2, i_3), \ldots, (i_{k-1}, i_k), (i_k, i_1)$, which form a circuit.  We refer to a \emph{positive weighted circuit matrix} when the weights are all positive numbers. 

When focusing on the spectral radius of a positive weighted circuit matrix, we may without loss of generality consider its non-zero principal submatrix, whose canonical permutation of the indices gives a \emph{positive cyclic weighted shift matrix}, $\A$, with elements
\an{
A_{ij} &= \Cases{
c_i > 0,	&\qquad j = i \text{ mod } n + 1, \quad i \in \set{1, \ldots, n}, \\
0,	& \qquad\text{otherwise.}
}\label{eq:CyclicShift}
}
Equation \eqref{eq:CyclicShift} defines a cyclic \emph{downshift} matrix, while an \emph{upshift} matrix results from replacing $j = i \text{ mod } n + 1$ with $i = j \text{ mod } n + 1$, which is equivalent for our purposes.  Cyclic weighted shift matrices have the form
\ab{
\Pmatr{
 0 & c_1 & 0 & 0 \\
 0 & 0 & c_2 & 0 \\
 0 & 0 & 0 & c_3 \\
 c_4 & 0 & 0 & 0 \\
} .
}

If one of the weights $c_i$ is set to $0$, the matrix becomes a positive non-cyclic weighted shift matrix.  In Section \ref{sec:WSM}, we show that Levinger's function of a positive non-cyclic weighted shift matrix is strictly concave.  Cyclicity from a single additional positive element $c_i>0$ allows nonconcavity.

Here we provide an example of nonconcavity using a cyclic shift matrix with \emph{reversible weights}, which have been the subject of recent attention \citep{Chien:and:Nakazato:2020:Symmetry}.  Figure \ref{fig:CyclicShift16} shows Levinger's function for a $16 \times 16$ cyclic weighted shift matrix with two-pivot reversible weights
\an{
c_j &= 16 + \sin\left(2 \pi \frac{j}{16}\right), &\quad j = 1, \ldots, 16 . \label{eq:CyclicShift16}
}
Levinger's function is convex for most of the interval $t \in [0,1]$, and is concave only in the small interval around $t=1/2$.
\begin{figure}[ht]
\centerline{\includegraphics[width=0.56\textwidth]{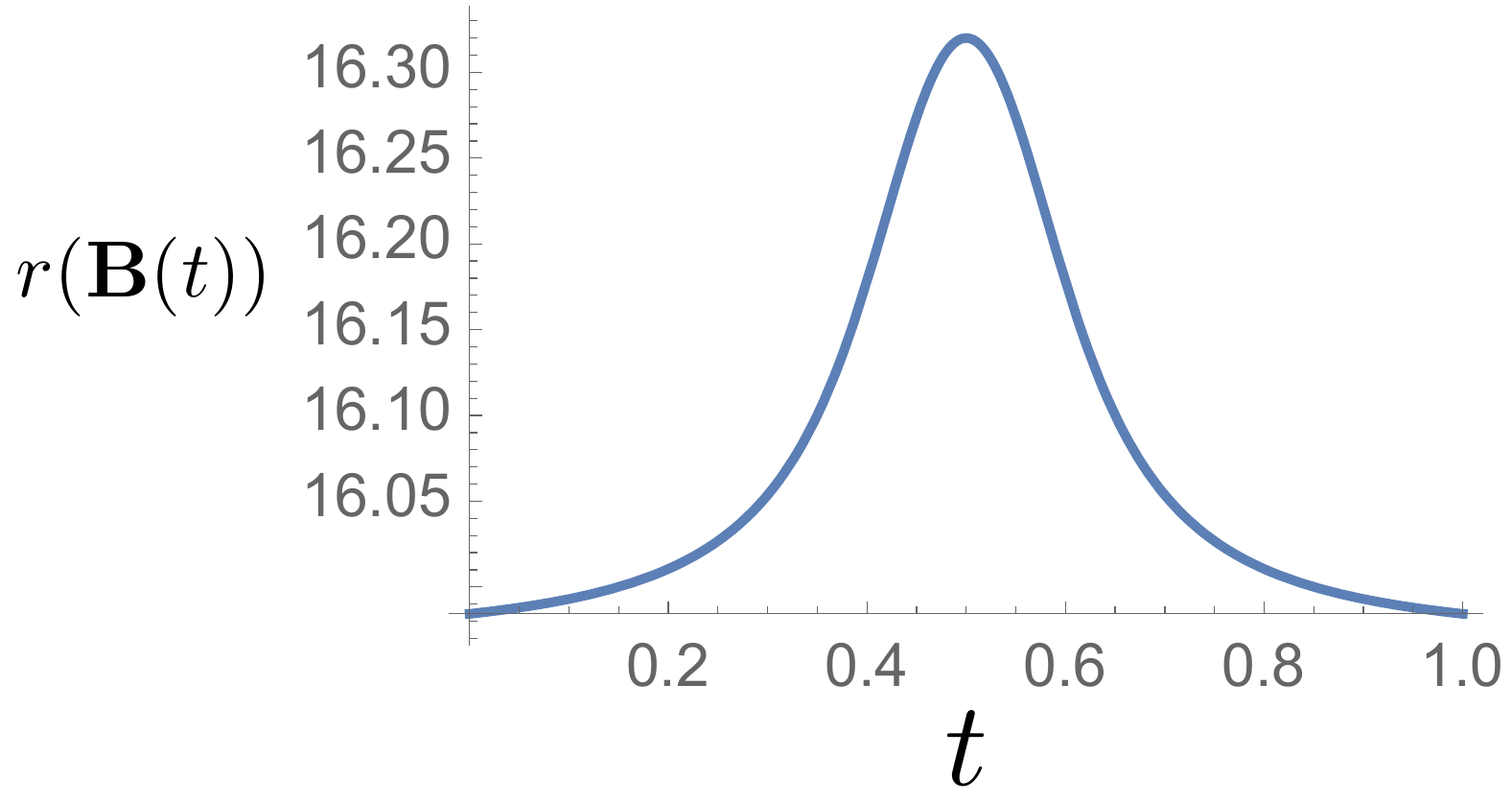}}
\caption{Nonconcave Levinger's function for a $16 \times 16$ two-pivot reversible cyclic weighted shift matrix with weights $c_j = 16 + \sin(2 \pi j /16)$, \eqref{eq:CyclicShift16}.}\label{fig:CyclicShift16} 
\end{figure}

\section{Matrices with Concave Levinger's Function}

Here we show that several special classes of nonnegative matrices have concave Levinger's functions:  $2\times 2$ matrices, non-cyclic weighted shift matrices, tridiagonal Toeplitz matrices, and Fiedler's 3-parameter Toeplitz matrices.

\subsection{\texorpdfstring{$2 \times 2$}{2 x 2}\  Matrices}

\begin{Theorem}\label{Theorem:2x2}
Let $\A \in \Reals^{2 \times 2}$ be nonnegative and irreducible. Then the spectral radius and Perron-Frobenius eigenvalue $r(t):=r((1{-}t) \A + t \A\tr)$ is concave over $t \in (0,1)$, strictly when $\A$ has different left and right Perron-Frobenius eigenvectors.
\end{Theorem}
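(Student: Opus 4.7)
The plan is to compute $r(t)$ in closed form for the $2 \times 2$ case and then recognize it as the composition of a concave nondecreasing function with a concave function. Writing $\A = \Pmatr{a & b \\ c & d}$ with $b, c > 0$ (forced by irreducibility), the matrix $\B(t) = (1{-}t)\A + t\A\tr$ has trace $a + d$ and off-diagonal entries $u(t) \eqdef (1{-}t)b + tc$ and $v(t) \eqdef tb + (1{-}t)c$. Since the Perron root is the larger root of the characteristic polynomial,
\[
r(t) = \frac{a+d}{2} + \frac{1}{2}\sqrt{(a-d)^2 + 4\, u(t)\, v(t)}.
\]

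A short expansion gives $u(t) v(t) = bc + (b-c)^2\, t(1-t)$, so that
\[
r(t) = \frac{a+d}{2} + \frac{1}{2}\sqrt{\alpha + \beta\, t(1-t)},
\]
where $\alpha \eqdef (a-d)^2 + 4bc > 0$ and $\beta \eqdef 4(b-c)^2 \ge 0$. Since $t(1-t)$ is concave on $(0,1)$, the radicand is concave (strictly so whenever $\beta > 0$) and stays bounded below by $\alpha > 0$. The square root is concave and nondecreasing on $[0,\infty)$, and the composition of a concave nondecreasing function with a concave function is concave, so $r(t)$ is concave on $(0,1)$. For strictness, I would invoke that $\sqrt{\cdot}$ is strictly increasing on $(0,\infty)$, so that when $\beta > 0$ the strict concavity of the radicand transfers to the composition.

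The last step is to match $\beta > 0$, i.e.\ $b \ne c$, with the hypothesis that $\A$ has different left and right normalized Perron vectors. Solving $\A v = r v$ gives $v \propto (b, r{-}a)\tr$, while the left Perron vector, obtained from $\A\tr w = r w$, is $w \propto (c, r{-}a)\tr$; since $bc > 0$ forces $r > a$ by the quadratic formula, the two are proportional if and only if $b = c$. I do not anticipate a genuine obstacle here: the $2 \times 2$ structure makes $r(t)$ fully explicit and reduces the question to the elementary composition rule for concave functions, and the only subtle point is this one-line identification of symmetry of $\A$ with coincidence of its left and right Perron vectors.
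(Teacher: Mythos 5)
Your proof is correct and follows essentially the same route as the paper: both reduce the problem to the identical closed form $r(t)=\tfrac{a+d}{2}+\tfrac12\sqrt{(a-d)^2+4bc+4(b-c)^2\,t(1-t)}$, the paper then verifying $r''(t)<0$ by explicit differentiation while you instead apply the composition rule for a nondecreasing concave function with a (strictly) concave radicand bounded away from zero, which is a clean equivalent. Your closing computation identifying $b\ne c$ with non-colinear left and right Perron vectors is a detail the paper asserts without proof, and your argument for it (using $r>a$, which follows from $bc>0$) is sound.
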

\begin{proof}
Let $a, b,c,d \in (0,\infty), t\in (0,1)$,
and assume $b\ne c$ to assure that $\A\ne\A\tr$ and the left and right Perron-Frobenius eigenvectors are not colinear.  Let
$$ \A:=\Pmatr{
a & b\\
c & d
},
\quad \B(t):=(1{-}t) \A+t \A\tr.$$
The Perron-Frobenius eigenvalue of $\B(t)$ is obtained by using the quadratic formula to solve the characteristic equation.  After some simplification,
$$ r(t):= r(\B(t)) = \frac{a+d+\sqrt{(a-d)^2 + 4t(1{-}t)(b-c)^2 + 4bc }}{2}. $$
The first derivative with respect to $t$ is
$$r'(t)=\frac{{\left(1{-}2\,t\right)}\,{{\left(b-c\right)}}^2 }{\sqrt{(a-d)^2 + 4t(1{-}t)(b-c)^2 + 4bc}}. $$
The denominator above is positive for all $t\in(0,1)$ because of the assumption that $b\ne c$.
The second derivative is, again after some simplification,
\an{\label{eq:f''(t)}
r''(t)=-\frac{2\,(b-c)^2 \,\left((a-d)^2+(b+c)^2\right)}{{{\left((a-d)^2 + 4t(1{-}t)(b-c)^2 + 4bc\right)}}^{3/2} }<0.
}
The numerator in the fraction above is positive because $b\ne c$, and the minus sign in front of the fraction guarantees strict concavity for all $t\in(0,1)$.
\end{proof}

\subsection{Tridiagonal Toeplitz Matrices}

\begin{Theorem}[Tridiagonal Toeplitz Matrices]\label{Theorem:Tridiag}
Let $\A \in \Reals^{n \times n}$, $n \geq 2$, be a tridiagonal Toeplitz matrix with diagonal elements $b\geq 0$, subdiagonal elements $a \geq 0$, and superdiagonal elements $c \geq 0$, with $\max(a, \ c) > 0$.  Then for $t \in (0, 1)$, $\spr( (1{-}t) \A + t \A\tr )$ is concave in $t$, increasing on $t \in (0,1/2)$, and decreasing on $t \in (1/2,1)$, all strictly when $a \neq c$.
\end{Theorem}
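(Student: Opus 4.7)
The plan is to exploit the classical closed-form spectrum of a tridiagonal Toeplitz matrix. First, I observe that $\B(t) = (1{-}t)\A + t\A\tr$ is itself tridiagonal Toeplitz, with diagonal $b$, subdiagonal $\alpha(t) := (1{-}t)a + tc$, and superdiagonal $\beta(t) := (1{-}t)c + ta$. Since $\alpha(t)\beta(t) \geq 0$, the standard formula for the eigenvalues of such a matrix gives
\[
\lambda_k(t) = b + 2\sqrt{\alpha(t)\beta(t)}\,\cos\!\left(\frac{k\pi}{n+1}\right), \qquad k = 1, \ldots, n,
\]
so, because $\cos(\pi/(n+1)) > 0$ for $n \geq 2$ dominates the remaining cosines in absolute value, the Perron root is $r(t) = b + 2\cos(\pi/(n+1))\,\sqrt{h(t)}$, where $h(t) := \alpha(t)\beta(t)$.

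A direct expansion then gives $h(t) = ac + t(1{-}t)(a{-}c)^2$, a concave quadratic in $t$, strictly so exactly when $a \neq c$. Under the hypothesis $\max(a,c) > 0$, $h$ is nonnegative on $[0,1]$ and positive on $(0,1)$ (in the degenerate case $ac = 0$ it vanishes only at the endpoints $t \in \{0,1\}$). Concavity of $r$ then follows from the standard fact that $\sqrt{\cdot}$, being concave and nondecreasing, preserves concavity of a nonnegative function; when $a \neq c$, $h$ is strictly concave and strictly positive on $(0,1)$, which yields strict concavity of $r$ there.

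For the monotonicity claim I would differentiate directly:
\[
r'(t) = \cos\!\left(\frac{\pi}{n+1}\right) \frac{(1{-}2t)(a{-}c)^2}{\sqrt{h(t)}},
\]
whose sign on $(0,1)$ agrees with that of $1{-}2t$ whenever $a \neq c$, producing strict increase on $(0,1/2)$ and strict decrease on $(1/2,1)$. When $a = c$, $\A$ is symmetric, $\B(t) \equiv \A$, and $r$ is constant, so the weak forms of the conclusions hold trivially.

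I do not anticipate a substantive obstacle. The only mild care required is in the degenerate case $ac = 0$ (so that $\A$ itself is strictly triangular), where $h$ vanishes at the endpoints: the eigenvalue formula is then applied on the open interval $(0,1)$ where $h(t) > 0$, and the behavior at the endpoints is inherited from continuity of eigenvalues in the matrix entries.
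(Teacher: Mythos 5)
Your proof is correct and follows essentially the same route as the paper: both observe that $\B(t)$ is again tridiagonal Toeplitz, invoke the closed-form spectrum to get $r(t) = b + 2\cos(\pi/(n{+}1))\sqrt{h(t)}$ with $h(t) = ac + t(1{-}t)(a{-}c)^2 > 0$ on $(0,1)$, and differentiate for the monotonicity claim. The only cosmetic difference is that you deduce concavity by composing the concave, increasing $\sqrt{\cdot}$ with the concave quadratic $h$, whereas the paper computes the second derivative explicitly; your handling of the degenerate case $ac = 0$ matches the paper's restriction of the eigenvalue formula to $t \in (0,1)$.
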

\begin{proof}
The eigenvalues of a tridiagonal Toeplitz matrix $\A$ with $a, c \neq 0$ are \citep[22-5.18]{Hogben:2014:Handbook} \citep[Theorem 2.4]{Bottcher:and:Grudsky:2005:Spectral}
\an{
\lambda_k(\A) &= b + 2 \sqrt{a c}\ \cos\left(\frac{k \pi}{n{+}1} \right).  \label{eq:BG2005}
}

The matrix $(1{-}t) \A + t \A\tr$ has subdiagonal values $(1{-}t) a + t c$ and superdiagonal values $t a + (1{-}t) c$.  Since at least one of $a,c$ is strictly positive, $(1{-}t) a + t c > 0$ and $t a + (1{-}t) c > 0$ for $t \in (0,1)$. Therefore \eqref{eq:BG2005} is applicable.
 
Writing $\lambda_k(t) \eqdef \lambda_k( (1{-}t) \A + t \A\tr)$, we obtain
\ab{
\lambda_k(t) &= b + 2 \sqrt{((1{-}t) a + t c) (t a + (1{-}t) c)}\  \cos\left(\frac{k \pi}{n{+}1} \right).
}
It is readily verified that the first derivatives are
\ab{
\df{}{t}\lambda_k(t) &= \cos\left(\frac{k \pi}{n{+}1} \right) 
\frac{(a-c)^2 (1{-}2t)}{\sqrt{((1{-}t) a + t c) (t a + (1{-}t) c)}},
\stext{and the second derivatives are}
\ddf{}{t}\lambda_k(t) &= - \cos\left(\frac{k \pi}{n{+}1} \right) 
\frac{(a^2-c^2)^2}{2 \big[ ( (1{-}t) a + t c) (t a + (1{-}t) c )\big]^{3/2} } .
}
Since $(1{-}t) a + t c > 0$ and $t a + (1{-}t) c > 0$ for $t \in (0,1)$, the denominators are positive.  When $a=c$ both derivatives are identically zero.  When $a \neq c$, the factors not dependent on  $k$ are strictly positive for all $t \in (0,1)$ except for $t=1/2$ where the first derivative of all the eigenvalues vanishes.

Because the second derivatives have no sign changes on $t \in (0,1)$, and since $[ (1{-}t) a + t c][t a + (1{-}t) c ] > 0$, there are no inflection points.  Therefore each eigenvalue is either convex in $t$ or concave in $t$, depending on the sign of $\cos( k \pi/(n+1) )$.  The maximal eigenvalue is 
\ab{
r(t) = \lambda_1(t) = b + 2 \sqrt{((1{-}t) a + t c) (t a + (1{-}t) c)}\  \cos(\pi/(n{+}1)) .
}
From its first derivative, since $ \cos(\pi/(n{+}1)) > 0$, $r(t)$ is increasing on $t \in (0, 1/2)$ and decreasing on $t \in (1/2,1)$, strictly when $a \neq c$.  Since its second derivative is negative, $r(t)$ is concave in $t$ on $t \in (0,1)$, strictly when $a \neq c$.
\end{proof}

\subsection{Fiedler's Toeplitz Matrices}

Fiedler \citet[p. 180]{Fiedler:1995:Numerical} established this closed formula for the spectral radius of a special Toeplitz matrix.

\begin{Theorem}[Fiedler's 3-Parameter Toeplitz Matrices]\label{Theorem:Fiedler}

Consider a Toeplitz matrix $\A \in \Complex^{n \times n}$, $n \geq 3$, 
with diagonal values $(v, 0, \ldots, 0, v, w, u, 0, \ldots, 0, u)$, with $v, w, u \in \Complex$:
\an{\label{eq:FiedlerFlipped}
\A&=\Pmatr{w & u & 0 & \cdots & 0 & u\\
v & w & u & 0 & \cdots & 0\\
0 & v & w & u & \cdots & 0\\
\vdots & \vdots & \ddots & \ddots & \ddots & \vdots \\
0 & 0 & \cdots & v & w & u \\
v & 0 & \cdots & 0 & v & w
}.
}
Let $\omega = e^{2 \pi i / n}$.  The eigenvalues of $\A$ are
\ab{
\lambda_{j+1}(\A) &= w + \omega^j u^{(1{-}1/n)} v^{1/n} + \omega^{n-j} u^{1/n} v^{(1{-}1/n)}, \qquad j = 0, 1, \ldots, n{-}1.
}
\end{Theorem}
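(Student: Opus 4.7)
The plan is to convert $\A$ into a circulant matrix by a diagonal similarity and then apply the standard eigenvalue formula for circulants. The matrix $\A$ fails to be circulant only because the corner entries $A_{1,n}=u$ and $A_{n,1}=v$ do not match the wrapped sub-diagonal and super-diagonal values they would have in a circulant of the same first-row pattern (where the $(1,n)$ corner and the sub-diagonal coincide, and the $(n,1)$ corner and the super-diagonal coincide). A diagonal rescaling of rows and columns, which leaves the spectrum unchanged, can be tuned to equalize these pairs.

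First, I would fix $\alpha\in\Complex$ with $\alpha^n=v/u$ and set $D=\mathrm{diag}(1,\alpha,\alpha^2,\ldots,\alpha^{n-1})$. Since $(D^{-1}\A D)_{ij}=\alpha^{j-i}A_{ij}$, the super-diagonal entries become $\alpha u$, the sub-diagonal entries become $\alpha^{-1}v$, the $(1,n)$ corner becomes $\alpha^{n-1}u$, and the $(n,1)$ corner becomes $\alpha^{1-n}v$. Using $\alpha^n=v/u$ one obtains $\alpha^{-1}v=\alpha^{n-1}u$ and $\alpha^{1-n}v=\alpha u$, so $D^{-1}\A D$ is the circulant matrix whose first row is $(w,\alpha u,0,\ldots,0,\alpha^{n-1}u)$.

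Next, I would apply the standard formula for the eigenvalues of a circulant with first row $(c_0,c_1,\ldots,c_{n-1})$, namely $\lambda_{k+1}=\sum_{j=0}^{n-1}c_j\omega^{jk}$ for $\omega=e^{2\pi i/n}$, to obtain $\lambda_{j+1}(\A)=w+\omega^j\alpha u+\omega^{(n-1)j}\alpha^{n-1}u$. Substituting $\alpha u=u^{(n-1)/n}v^{1/n}$ and $\alpha^{n-1}u=u^{1/n}v^{(n-1)/n}$, and simplifying $\omega^{(n-1)j}=\omega^{-j}=\omega^{n-j}$, then delivers the stated formula.

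The computation is essentially bookkeeping; there is no serious obstacle once the similarity trick is in hand. Two minor points deserve comment: the branch ambiguity in choosing the $n$-th root $\alpha=(v/u)^{1/n}$ merely permutes the index $j$ over the multiset of eigenvalues, so the identity is unambiguous as a statement about the spectrum; and the degenerate cases $uv=0$, where the diagonal similarity is unavailable, can be handled directly, since then $\A$ is triangular with every diagonal entry equal to $w$, and the formula correctly returns $\lambda_{j+1}=w$ for all $j$.
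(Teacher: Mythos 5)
The paper does not actually prove this statement: it is quoted from Fiedler (1995) and used as a black box, so there is no internal proof to compare against. Your argument is correct and complete, and it supplies the proof the paper omits. The diagonal similarity $D=\mathrm{diag}(1,\alpha,\ldots,\alpha^{n-1})$ with $\alpha^n=v/u$ does exactly what you claim: since $(D^{-1}\A D)_{ij}=\alpha^{j-i}A_{ij}$ depends only on $j-i$ for a Toeplitz matrix, the single constraint $\alpha^n=v/u$ simultaneously forces the entries at offsets $1$ and $1-n$ (and at $-1$ and $n-1$) to agree, so the conjugate is the circulant with first row $(w,\alpha u,0,\ldots,0,\alpha^{n-1}u)$; the circulant eigenvalue formula together with $\alpha u=u^{(n-1)/n}v^{1/n}$, $\alpha^{n-1}u=u^{1/n}v^{(n-1)/n}$, and $\omega^{(n-1)j}=\omega^{-j}=\omega^{n-j}$ then yields the stated spectrum. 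Your two caveats are also the right ones: a different choice of the $n$-th root multiplies $\alpha$ by a power of $\omega$ and merely relabels $j$, which matches the branch ambiguity already implicit in the theorem's own use of $u^{1/n}$ and $v^{1/n}$ (one should just note that the same branches must be used consistently in both terms, as your substitution does); and when $uv=0$ the matrix is triangular with constant diagonal $w$, which the formula reproduces because both exponents $1/n$ and $1-1/n$ are positive. What your route buys, relative to the paper's bare citation, is a short self-contained derivation that also makes transparent why the result extends to arbitrary complex $u,v,w$ and why the spectrum depends on $u$ and $v$ only through the products $u^{(n-1)/n}v^{1/n}$ and $u^{1/n}v^{(n-1)/n}$, which is precisely the structure the paper exploits in Theorem \ref{Theorem:FiedlerLevinger}.
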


We apply Theorem \ref{Theorem:Fiedler} to the Levinger function.

\begin{Theorem}\label{Theorem:FiedlerLevinger}
Let $\A$ be defined as in \eqref{eq:FiedlerFlipped} with $u,v,w > 0$.  Then $\spr(t) \eqdef \spr( (1{-}t) \A + t \A \tr)$ is concave in $t$ for $t \in (0,1)$, strictly if $u \neq v$.
\end{Theorem}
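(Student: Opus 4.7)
The plan is to exploit the fact that Levinger's homotopy preserves Fiedler's 3-parameter Toeplitz form, apply Theorem \ref{Theorem:Fiedler} to obtain a closed-form expression for $r(t)$, and then verify concavity by a one-variable second-derivative computation. First I observe that $\A\tr$ is obtained from $\A$ in \eqref{eq:FiedlerFlipped} simply by swapping the roles of $u$ and $v$ while leaving $w$ and the structural pattern unchanged. Hence $\B(t) \eqdef (1{-}t)\A + t\A\tr$ is itself of Fiedler's form \eqref{eq:FiedlerFlipped}, with the same diagonal parameter $w$ and modified off-diagonal parameters $\tilde u(t) \eqdef (1{-}t)u + tv$ and $\tilde v(t) \eqdef (1{-}t)v + tu$. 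Since $u, v > 0$, both $\tilde u(t)$ and $\tilde v(t)$ remain strictly positive for $t \in [0,1]$, so $\B(t)$ is nonnegative and irreducible. Applying Theorem \ref{Theorem:Fiedler} to $\B(t)$, the $j{=}0$ eigenvalue
\[
\lambda_1(\B(t)) = w + \tilde u(t)^{1-1/n}\,\tilde v(t)^{1/n} + \tilde u(t)^{1/n}\,\tilde v(t)^{1-1/n}
\]
is real and positive, and by Perron-Frobenius it equals $r(t)$.

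The crucial observation for the next step is that $\tilde u(t) + \tilde v(t) = u + v \eqdef S$ is \emph{constant} in $t$. Setting $\alpha \eqdef 1 - 1/n \in (0,1)$ and $x \eqdef \tilde u(t) = u + t(v - u)$, so that $x(t)$ is affine in $t$ and $\tilde v(t) = S - x$, I obtain
\[
r(t) = w + f_1(x) + f_2(x), \quad f_1(x) \eqdef x^\alpha (S{-}x)^{1-\alpha}, \quad f_2(x) \eqdef x^{1-\alpha}(S{-}x)^\alpha,
\]
with $x \in (0,S)$. Since $x(t)$ is affine, concavity of $r$ in $t$ reduces to concavity of $f_1 + f_2$ in $x$. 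A direct computation, in which the routine polynomial terms cancel cleanly, yields
\[
f_1''(x) = \alpha(\alpha-1)\, S^2\, x^{\alpha-2}(S-x)^{-\alpha-1},
\]
which is strictly negative on $(0, S)$ since $\alpha(\alpha-1) < 0$. The analogous computation for $f_2$ (or equivalently the substitution $\alpha \leftrightarrow 1-\alpha$) gives $f_2''(x) < 0$ as well. Therefore $r(t)$ is concave on $(0,1)$, and strictly so whenever $x(t)$ is nonconstant, i.e., whenever $u \neq v$; when $u = v$, $\A$ is symmetric and $r(t)$ is constant.

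The substantive content is really the conceptual Step 1 — recognizing that Fiedler's 3-parameter family is closed under Levinger's homotopy and that the two variable parameters $\tilde u(t), \tilde v(t)$ move along a line of constant sum. Once this is in hand, nothing genuinely hard remains: the second-derivative calculation is routine algebra, and the identification of the $j{=}0$ eigenvalue with the spectral radius is automatic from Perron-Frobenius. As a sanity check that avoids invoking Perron-Frobenius, one can write the remaining eigenvalues of $\B(t)$ in the form $w + p\omega^j + q\omega^{-j}$ with $p = \tilde u^{1-1/n}\tilde v^{1/n} > 0$ and $q = \tilde u^{1/n}\tilde v^{1-1/n} > 0$, and verify directly that $|w + p\omega^j + q\omega^{-j}| \leq w + p + q$ with equality only at $j=0$.
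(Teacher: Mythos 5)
Your proof is correct and follows essentially the same route as the paper's: both observe that Fiedler's three-parameter Toeplitz family is closed under Levinger's homotopy with parameters $(1{-}t)u+tv$ and $(1{-}t)v+tu$, apply Theorem \ref{Theorem:Fiedler} to get a closed form for $r(t)$, and conclude by a second-derivative computation. Your constant-sum reparametrization $x=(1{-}t)u+tv$, $S-x=(1{-}t)v+tu$ with the clean factorization $f_1''(x)=\alpha(\alpha{-}1)S^2x^{\alpha-2}(S-x)^{-\alpha-1}$ (which checks out) is merely a tidier way to organize the same calculus the paper does directly in $t$.
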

\begin{proof}
For $u, v, w > 0$, $\spr(\A) = \lambda_1(\A) = w + u^{(1{-}1/n)} v^{1/n} + u^{1/n} v^{(1{-}1/n)}$ from Theorem \ref{Theorem:Fiedler}.

Let $\B(t) = (1{-}t) \A + t \A\tr$.  Then $\B(t)$ is again a Toeplitz matrix of the form \eqref {eq:FiedlerFlipped}, with diagonal values $(1{-}t)v{+}t u, 0, \ldots, 0, (1{-}t)v{+}t u, w, (1{-}t)u {+} t v,$ $0, \ldots, 0$, $(1{-}t)u{+}t v$ for matrix elements $A_{i, i{+}m}$, with $m \in \set{1{-}n, n{-}1}$, and $i \in $$\{\max(1, 1{-}m)$, $\ldots$, $\min(n, n{-}m)$$\}$.   So again by Theorem \ref{Theorem:Fiedler},
\ab{
\spr(\B(t)) = w &+ [(1{-}t)u + t v]^{(1{-}1/n)} \ [(1{-}t)v+t u]^{1/n} \\ &
 + [(1{-}t)u + t v]^{1/n}\  [(1{-}t)v+t u]^{(1{-}1/n)} .
}
It is readily verified that 
\ab{
&\ddf{}{t} \spr(\B(t)) \\ 
&= - \frac{n-1}{n^2 u^2 v^2} (u-v)^2 (u+v)^2 \\& 
\quad \times
 \left([(1{-}t)v + u]^{1/n} [(1{-}t)u + t v]^{(1{-}1/n)} + [(1{-}t)v + u]^{(1{-}1/n)} [(1{-}t)u + t v]^{1/n} \right) \\
 & \leq 0,
}
with equality if and only if $u = v$.
\end{proof}
With the simple exchange of $A_{1n}$ and $A_{n1}$ in \eqref {eq:FiedlerFlipped}, $\A$ would become a circulant matrix, which has left and right Perron vectors colinear with the vector of all ones, $\ev$, and would therefore have a constant Levinger's function.

\subsection{Weighted Shift Matrices} \label{sec:WSM}
An $n \times n$ weighted shift matrix, $\A$, has the form 
\ab{
A_{ij} = \Cases{
c_i,	&\qquad j=i+1, \quad i \in \set{1, \ldots, n-1},\\
0,	& \qquad \text{otherwise},
}
}
where $c_i$ are the weights.  It is obtained from a cyclic shift matrix be setting any one of the weights to $0$ and appropriately permuting the indices.  Unless we explicitly use ``cyclic'', we mean \emph{non-cyclic shift matrix} when we write ``shift matrix''.

We will show that Levinger's function for positive weighted shift matrices is strictly concave.  First we develop some lemmas. 

\begin{Lemma}\label{Lemma:poly}
Let $\cv \in \Complex^{n+1}$ be a vector of complex numbers and $\alpha \in \Complex$, $\alpha \neq 0$.  Then the roots of a polynomial $p(x) = \sum_{k=0}^n x^k \alpha^{n-k} c_k$ are $r_j = \alpha f_j(\cv)$, where $f_j: \Complex^{n+1} \goesto \Complex$, $j = 1, \ldots, n$.
\end{Lemma}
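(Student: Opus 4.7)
The plan is to prove this by a direct change of variables that pulls the parameter $\alpha$ outside the polynomial entirely, reducing $p(x)$ to $\alpha^n$ times a polynomial whose coefficients depend only on $\cv$.

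Concretely, I would substitute $x = \alpha y$ in $p(x) = \sum_{k=0}^n x^k \alpha^{n-k} c_k$. Each term becomes $(\alpha y)^k \alpha^{n-k} c_k = \alpha^n y^k c_k$, so the $\alpha$'s recombine uniformly and
\ab{
p(\alpha y) \;=\; \alpha^n \sum_{k=0}^n c_k y^k \;=:\; \alpha^n\, q(y; \cv),
}
where $q(y; \cv) = \sum_{k=0}^n c_k y^k$ depends only on $\cv$. Since $\alpha \neq 0$, the zeros of $p$ are in bijection with the zeros of $q$ via $x \mapsto x/\alpha$. Labeling the $n$ roots of $q(y;\cv)$ (counted with multiplicity, in some fixed ordering) as $f_1(\cv), \ldots, f_n(\cv)$ defines the claimed functions $f_j:\Complex^{n+1}\goesto\Complex$, and the roots of $p(x)$ are then $r_j = \alpha f_j(\cv)$, as desired.

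The only mildly subtle point is the choice of ordering of the roots, and the case in which $c_n = 0$ so that $q$ has degree strictly less than $n$. In the latter case one may adopt the convention that the ``missing'' roots are assigned the value $\infty$, or equivalently restrict the statement to those coefficient vectors $\cv$ with $c_n \neq 0$; either convention is harmless for the intended application to weighted shift matrices, where the relevant polynomial will have the appropriate leading coefficient. I do not anticipate any real obstacle: the whole content of the lemma is the homogeneity observation $p(\alpha y) = \alpha^n q(y; \cv)$.
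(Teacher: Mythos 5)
Your proof is correct and is essentially the paper's own argument: the paper likewise factors $p(x) = \alpha^n \sum_{k=0}^n (x/\alpha)^k c_k$ and applies the Fundamental Theorem of Algebra to the $\alpha$-free polynomial, which is exactly your substitution $x = \alpha y$. Your extra remark about the degenerate case $c_n = 0$ (where $q$ has degree less than $n$) is a point the paper's proof silently glosses over, and your handling of it is fine for the intended application.
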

\begin{proof}
We factor and apply the Fundamental Theorem of Algebra to obtain
\ab{
p(x) &
= \sum_{k=0}^n x^k \alpha^{n-k} c_k
= \alpha^n \sum_{k=0}^n \Pfrac{x}{\alpha}^k c_k
= \alpha^n \prod_{j=1}^n \left( \frac{x}{\alpha} - f_j(\cv) \right). 
}
Hence the roots of $p(x)$ are $\set{\alpha f_j(\cv)\ |\  j = 1, \ldots, n}$.
\end{proof}

\begin{Lemma}\label{Lemma:ab}
Let $\alpha, \beta \in \Complex \backslash 0$, $\A(\alpha, \beta) = [A_{ij}]$ be a hollow tridiagonal matrix, where $A_{ij} > 0$ for $j=i+1$ and $j=i-1$, $A_{ij}=0$ otherwise, and
\ab{
A_{ij} = 
\Cases{
\alpha \, c_{ij},	& \qquad j=i+1, \quad i \in \set{1, \ldots, n-1},\\
\beta \, c_{ij},	& \qquad j=i-1, \quad i \in \set{2, \ldots, n},
}
}
so $\A(\alpha, \beta) $ has the form
\ab {
\A(\alpha,\beta) &=\Pmatr{
0 & \alpha \, c_{12} & 0 & \cdots & 0 & 0 & 0\\
\beta \, c_{21} & 0 & \alpha \, c_{23}& \cdots & 0 & 0 & 0\\
0 & \beta \, c_{32} & 0 & \ddots & 0 &0 & 0\\
\vdots & \vdots & \ddots & \ddots & \ddots & \vdots & \vdots \\
0 & 0 & 0 & \ddots & 0 & \alpha \, c_ {n{-}2, n{-}1} & 0 \\
0 & 0 & 0 & \cdots & \beta \, c_{n{-}1, n{-}2} & 0 & \alpha \, c_ {n{-}1, n} \\
0 & 0 & 0 & \cdots & 0 & \beta \, c_ {n, n{-}1} & 0
}.
}
Let $\cv \in \Complex^{2(n-1)}$ represent the vector of $c_{ij}$ constants.

Then the eigenvalues of $\A$ are of the form $\sqrt{\alpha \beta}\:  f_h(\cv) $, $h = 1, \ldots, n$, where $f_h\suchthat \Complex^{2(n-2)} \goesto \Complex$ are functions of the $c_{ij}$ constants that do not depend on $\alpha$ or $\beta$.
\end{Lemma}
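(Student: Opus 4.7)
The plan is to apply Lemma \ref{Lemma:poly} with parameter $\gamma := \sqrt{\alpha\beta}$, after showing that the characteristic polynomial of $\A(\alpha,\beta)$ depends on $\alpha$ and $\beta$ only through the product $\alpha\beta$ and that only powers of $x$ whose parity matches that of $n$ appear.

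The first step is to establish by induction on $n$ that the characteristic polynomial $p_n(x) := \det(xI - \A(\alpha,\beta))$ has the form
\[ p_n(x) \;=\; \sum_{k=0}^{\lfloor n/2 \rfloor} (-\alpha\beta)^k\, g_k(\cv)\, x^{n-2k}, \]
where each $g_k$ depends only on $\cv$. This follows from the standard three-term recurrence for tridiagonal characteristic polynomials, obtained by cofactor expansion along the last row,
\[ p_n(x) \;=\; x\, p_{n-1}(x) \;-\; \alpha\beta\, c_{n-1,n}\, c_{n,n-1}\, p_{n-2}(x), \]
with $p_{n-1}$ and $p_{n-2}$ the characteristic polynomials of the top-left principal submatrices (which have the same hollow tridiagonal structure). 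The first term raises the power of $x$ by one; the second contributes one factor of $\alpha\beta$ and lowers the power of $x$ by one; by the inductive hypothesis both parts preserve the stated form. Substituting $m = n - 2k$, this becomes $p_n(x) = \sum_{m=0}^n x^m\, \gamma^{n-m}\, \tilde c_m(\cv)$ with $\tilde c_m(\cv) = (-1)^{(n-m)/2}\, g_{(n-m)/2}(\cv)$ when $n-m$ is even and $\tilde c_m(\cv) = 0$ otherwise, each $\tilde c_m$ depending on $\cv$ alone.

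This is precisely the shape of polynomial treated in Lemma \ref{Lemma:poly}, applied with its parameter $\alpha$ set equal to $\gamma = \sqrt{\alpha\beta}$. The lemma then identifies the roots of $p_n$ as $\gamma\, f_h(\cv) = \sqrt{\alpha\beta}\, f_h(\cv)$ for $\cv$-dependent functions $f_h$, $h = 1,\ldots,n$, which is the conclusion sought.

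The main obstacle is really only the bookkeeping in the induction, namely the simultaneous preservation of ``$\alpha\beta$-only'' dependence and the parity of powers of $x$. A slicker alternative that bypasses this bookkeeping is a direct diagonal similarity: with $D := \mathrm{diag}\bigl(1, (\alpha/\beta)^{1/2}, \alpha/\beta, \ldots, (\alpha/\beta)^{(n-1)/2}\bigr)$, a one-line computation shows $D^{-1} \A(\alpha,\beta) D = \sqrt{\alpha\beta}\,\A(1,1)$, where $\A(1,1)$ is the same matrix with $\alpha = \beta = 1$; similarity then yields the result immediately with $f_h(\cv)$ the eigenvalues of $\A(1,1)$. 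In either approach the branch of the square root is immaterial, because $\A(1,1)$, being hollow tridiagonal, is similar to its negative via $\mathrm{diag}(1,-1,1,-1,\ldots)$, so its spectrum is symmetric about $0$ and the unordered multiset $\{\sqrt{\alpha\beta}\,f_h(\cv)\}$ is branch-independent.
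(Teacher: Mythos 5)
Your main argument is essentially the paper's own proof: both derive the three-term recurrence $p_n(x) = x\,p_{n-1}(x) - \alpha\beta\,c_{n-1,n}c_{n,n-1}\,p_{n-2}(x)$ for the principal leading submatrices, show by induction that every coefficient of $x^m$ carries the factor $(\sqrt{\alpha\beta})^{\,n-m}$ times a function of $\cv$ alone, and then invoke Lemma \ref{Lemma:poly} with parameter $\sqrt{\alpha\beta}$. Your parity observation (only $k$ with $n-k$ even contribute) is a refinement the paper does not state but does not need.

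Your ``slicker alternative'' via diagonal similarity is a genuinely different and arguably cleaner route, since it dispenses with the induction entirely and makes the branch-independence transparent; it is worth recording, but as written the scaling matrix is inverted. With $D = \mathrm{diag}\bigl(1,(\alpha/\beta)^{1/2},\ldots\bigr)$ one gets $(D^{-1}\A D)_{i,i+1} = (\alpha/\beta)^{1/2}\alpha\,c_{i,i+1} = \alpha^{3/2}\beta^{-1/2}c_{i,i+1}$, not $\sqrt{\alpha\beta}\,c_{i,i+1}$. Taking instead $D = \mathrm{diag}\bigl(1,(\beta/\alpha)^{1/2},(\beta/\alpha),\ldots,(\beta/\alpha)^{(n-1)/2}\bigr)$ gives $d_{i+1}/d_i = (\beta/\alpha)^{1/2}$, whence both the superdiagonal and subdiagonal entries of $D^{-1}\A D$ become $\sqrt{\alpha\beta}$ times the corresponding $c_{ij}$, and the claim follows. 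With that correction the alternative is sound, and your closing remark that the spectrum of a hollow tridiagonal matrix is symmetric about $0$ (via conjugation by $\mathrm{diag}(1,-1,1,\ldots)$) correctly disposes of the square-root branch ambiguity, a point the paper leaves implicit.
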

\begin{proof}
The characteristic polynomial of $\A$ is
\ab{
p_\A(\lambda) &=
\det(\lambda\I - \A)\\
&= 
\begin{vmatrix}
\lambda & -\alpha \, c_{12} & 0 & \cdots & 0 & 0 & 0\\
-\beta \, c_{21} & \lambda & -\alpha \, c_{23}& \cdots & 0 & 0 & 0\\
0 & -\beta \, c_{32} & \lambda & \ddots & 0 &0 & 0\\
\vdots & \vdots & \ddots & \ddots & \ddots & \vdots & \vdots \\
0 & 0 & 0 & \ddots & \lambda & -\alpha \, c_ {n{-}2, n{-}1} & 0 \\
0 & 0 & 0 & \cdots & -\beta \, c_{n{-}1, n{-}2} & \lambda & -\alpha \, c_ {n{-}1, n} \\
0 & 0 & 0 & \cdots & 0 & -\beta \, c_ {n, n{-}1} & \lambda
\end{vmatrix}.
}
The characteristic polynomial has the recurrence relation
\an{
p_{\A_k}(\lambda) &
= \lambda \  p_{\A_{k-1}}(\lambda) - \alpha \, \beta \, c_{k,k{-}1} \  c_{k{-}1,k} \  p_{\A_{k-2}}(\lambda), &\quad k \in \set{3, \ldots, n}, \label{eq:rec}\\
\stext{with initial conditions}
p_{\A_2}(\lambda) &= \lambda^2 - \alpha \, \beta \ c_{12}\ c_{21}, \quad\text{and} \label{eq:A2} \\
p_{\A_1}(\lambda) &= \lambda, \label{eq:A1}
}
where $\A_k$ is the principal submatrix of $\A$ over indices $1, \ldots, k$.

We show by induction that for all $k \in \set{2, \ldots, n}$,
\an{
p_{\A_k}(\lambda) &
= \sum_{j=0}^k \lambda^j (\alpha \beta)^{ (k-j)/2} \, g_{jk}(\cv)
= \sum_{j=0}^k \lambda^j \sqrt{\alpha \beta}^{\, (k-j)} g_{jk}(\cv),	 \label{eq:IH}
}
where each $g_{jk}\suchthat \Complex^{2(n-1)} \goesto \Complex$, $k \in \set{2, \ldots, n}$, $j \in \set{0, \ldots, k}$, is a function of constants $\cv$.

From \eqref{eq:A2}, we see that \eqref{eq:IH} holds for $k=2$:
$
p(\A_2)(\lambda) = \lambda^2 - \alpha \, \beta \,  c_{12}\, c_{21} .
$

For $k=3$, from the recurrence relation \eqref{eq:rec} and initial conditions \eqref{eq:A1}, \eqref{eq:A2}, we have
\ab{
p(\A_3)(\lambda) &
= \lambda\, p_{\A_{2}}(\lambda) - \alpha \beta \, c_{32} \, c_{23} \, p_{\A_{1}}(\lambda) 
= \lambda (\lambda^2 - \alpha \beta \, c_{12}\, c_{21}) - \alpha \beta \, c_{32} \, c_{23} \, \lambda \\ &
= \lambda^3 - \lambda \sqrt{\alpha \beta}^{\, 2}( c_{12}\, c_{21} + c_{32} \, c_{23}),
}
which satisfies \eqref{eq:IH}.  These are the basis steps for the induction.

For the inductive step, we need to show that if \eqref{eq:IH} holds for $k-1, k-2$ then it holds for $k$.  Suppose that \eqref{eq:IH} holds for $2 \leq k-1, k-2 \leq n-1$.  Then
\ab{
&p_{\A_k}(\lambda) 
= \lambda\, p_{\A_{k-1}}(\lambda) - \alpha\beta \, c_{k,k{-}1} \, c_{k{-}1,k} \  p_{\A_{k-2}}(\lambda)\\ &
= \lambda \sum_{j=0}^{k{-}1} \lambda^j \sqrt{\alpha \beta}^{\, (k{-}1{-}j)} g_{j,k{-}1}(\cv)
 - \alpha\beta \, c_{k,k{-}1} \, c_{k{-}1,k}   \sum_{j=0}^{k-2} \lambda^j \sqrt{\alpha \beta}^{\, (k-2-j)} g_{j,k-2}(\cv) \\ &
= \sum_{j=1}^{k} \lambda^{j} \sqrt{\alpha \beta}^{\,( k{-}j)} g_{j-1,k{-}1}(\cv)
 {-} \sum_{j=0}^{k{-}2} \lambda^j \sqrt{\alpha \beta}^{\, (k-j)} c_{k,k{-}1} \, c_{k{-}1,k} \ g_{j,k-2}(\cv) , 
}
which satisfies \eqref{eq:IH}.  Thus by induction $p_{\A_n}(\lambda)$ satisfies \eqref{eq:IH}.

Then Lemma \ref{Lemma:poly} implies that the parameters $\set{\alpha, \beta}$ appear 
as the linear factor $\sqrt{\alpha \beta}$
in each root of the characteristic polynomial of $\A(\alpha,\beta)$ --- its eigenvalues.
\end{proof}

\begin{Theorem}[Weighted Shift Matrices]\label{Theorem:Shift}
Levinger's function is strictly concave for nonnegative weighted shift matrices with at least one positive weight.
\end{Theorem}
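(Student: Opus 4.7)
The strategy is to use Lemma \ref{Lemma:ab} to write every eigenvalue of $\B(t):=(1{-}t)\A+t\A\tr$ as $\sqrt{t(1{-}t)}$ times a quantity independent of $t$, after which strict concavity of $r(t)$ will follow from the strict concavity of $\sqrt{t(1{-}t)}$ on $(0,1)$.

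First I would reduce to the case where every weight $c_i$ is strictly positive. If some $c_k$ vanishes, the $(k,k{+}1)$ and $(k{+}1,k)$ entries of $\B(t)$ both vanish, so $\B(t)$ decouples along rows/columns $\{1,\ldots,k\}$ and $\{k{+}1,\ldots,n\}$. Iterating, $\B(t)$ becomes block diagonal, with each block either a $1{\times}1$ zero block or a Levinger homotopy of a shift matrix whose weights are all positive. The spectral radius of $\B(t)$ is the maximum of the block spectral radii. Once the all-positive case is proved, each nontrivial block contributes $\sqrt{t(1{-}t)}\,C_b$ with $C_b>0$ and each trivial block contributes $0$, giving $r(t)=\sqrt{t(1{-}t)}\,\max_b C_b$. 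Since at least one $c_i>0$, at least one block is nontrivial, so $\max_b C_b>0$ and strict concavity transfers from the positive case.

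For the all-positive case, I would identify $\B(t)$ with the matrix $\A(\alpha,\beta)$ of Lemma \ref{Lemma:ab} by choosing $\alpha=1{-}t$, $\beta=t$, and $c_{i,i+1}=c_{i+1,i}=c_i$. The lemma then yields eigenvalues of the form $\lambda_h(t)=\sqrt{t(1{-}t)}\,f_h(\cv)$, where the $f_h$ are independent of $t$. To see that the $f_h$ are real, I would conjugate $\B(t)$ by the diagonal matrix $\D$ with entries $d_i=(t/(1{-}t))^{(i-1)/2}$; a short computation shows $\D^{-1}\B(t)\D=\sqrt{t(1{-}t)}\,\K$, where $\K$ is the real symmetric tridiagonal matrix with zero diagonal and positive off-diagonal entries $c_i$. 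Hence the eigenvalues of $\B(t)$ are real, $r(t)=\sqrt{t(1{-}t)}\,\spr(\K)$, and $\spr(\K)>0$ because $\K$ is a nonzero symmetric matrix.

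Finally, $\sqrt{t(1{-}t)}$ has second derivative $-\tfrac14\bigl(t(1{-}t)\bigr)^{-3/2}<0$ on $(0,1)$ and is therefore strictly concave there, and multiplication by a positive constant preserves strict concavity. I expect the main technical step to be the bookkeeping in the reducible case, because once the block decomposition is in hand the heavy lifting is done by Lemma \ref{Lemma:ab} together with the elementary symmetrization and the concavity of $\sqrt{t(1{-}t)}$.
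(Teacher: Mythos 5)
Your proof is correct, and its skeleton --- decompose $\B(t)$ into blocks along the vanishing weights, show every eigenvalue is $\sqrt{t(1{-}t)}$ times a $t$-independent constant, and invoke strict concavity of $\sqrt{t(1{-}t)}$ --- is the same as the paper's. The genuinely different ingredient is your diagonal similarity $\D^{-1}\B(t)\D = \sqrt{t(1{-}t)}\,\K$ with $d_i = (t/(1{-}t))^{(i-1)/2}$, which checks out: it carries the superdiagonal entry $(1{-}t)c_i$ and the subdiagonal entry $t\,c_i$ both to $\sqrt{t(1{-}t)}\,c_i$. That one observation actually makes Lemma \ref{Lemma:poly} and Lemma \ref{Lemma:ab} unnecessary for this theorem and simultaneously delivers the three facts the paper assembles from separate sources: the $\sqrt{t(1{-}t)}$ scaling of the spectrum (paper: the inductive characteristic-polynomial argument of Lemma \ref{Lemma:ab}), the realness of the eigenvalues (paper: a citation that Jacobi matrices have real spectra), and the positivity of $r(t)$ on $(0,1)$ (paper: the Horn--Johnson principal-submatrix bound), since $\spr(\K) = \|\K\|_2 > 0$ for a nonzero real symmetric $\K$. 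Your handling of the reducible case is also sound, and it is worth stating explicitly why the maximum over blocks is harmless here: every block's spectral radius is the \emph{same} concave function $\sqrt{t(1{-}t)}$ up to a nonnegative constant factor, so the maximum is again $\sqrt{t(1{-}t)}$ times a positive constant --- in contrast to the paper's direct-sum counterexamples, where nonconcavity arises precisely from taking maxima of \emph{different} concave functions.
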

\begin{proof}
Let the positive weighted shift matrix $\A$ be defined as
\ab{
A_{ij} = \Cases{
c_i \geq 0,	& \qquad j=i+1, \quad i \in \set{1, \ldots, n-1},\\
0,	& \qquad \text{otherwise},
}
}
where $c_i$ are the weights and $c_i >0$ for at least one $i = 1, \ldots, n-1$.

By Lemma \ref{Lemma:ab}, all the eigenvalues of Levinger's homotopy $\B(t) = (1{-}t) \A + t \A\tr$ are of the form $\lambda_i(\B(t)) = \sqrt{t(1{-}t)}\, f_i(\cv)$, where $\cv$ is the vector of weights, and $f_i\suchthat \Reals^{n-1} \goesto \Reals$, since $\B(t)$ is a direct sum of one or more (if some $c_i=0$) Jacobi matrices and these have real eigenvalues \citep[22.7.2]{Hogben:2014:Handbook}. 

If at least one weight $c_i$ is positive, then $\B(t)$ has a principal submatrix $\Pmatr{0& (1{-}t)  c_i\\t\,  c_i&0}$ with a positive spectral radius for $t \in (0,1)$.  Thus by \citet[Corollary 8.1.20(a)]{Horn:and:Johnson:2013}, $\spr(\B(t)) > 0$ for $t \in (0,1)$.  Therefore for $t \in (0,1)$, $\spr(\B(t)) = \lambda_1(\B(t)) = \sqrt{t(1{-}t)}\: f_1(\cv) > 0$.  Since $\sqrt{t(1{-}t)}$ is strictly concave in $t$ for $t \in (0,1)$, Levinger's function is strictly concave in $t$ for $t \in (0,1)$.
\end{proof}

\begin{Corollary}
Levinger's function is strictly concave for a nonnegative hollow tridiagonal matrix, $\A \in \Reals^{n \times n}$, in which $A_{ii}=0$ for $i \in \set{1, \ldots, n}$, and where for each $i \in \set{1, \ldots, n-1}$, $A_{i,i+1}\,  A_{i+1,i} = 0$, and for at least one $i$, $A_{i,i+1} > 0$.
\end{Corollary}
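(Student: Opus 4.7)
The plan is to reduce the corollary to Theorem~\ref{Theorem:Shift} by showing that $\B(t) := (1{-}t)\A + t\A\tr$ is similar, for each fixed $t\in(0,1)$, to the Levinger homotopy of a nonnegative weighted shift matrix.

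First, I would exploit the hypothesis $A_{i,i+1}\,A_{i+1,i}=0$: at each location $i\in\{1,\ldots,n{-}1\}$ at most one of $A_{i,i+1}$, $A_{i+1,i}$ is positive. Writing $c_i := A_{i,i+1}+A_{i+1,i}\ge 0$, a direct computation gives $B_{i,i+1}=(1{-}t)A_{i,i+1}+tA_{i+1,i}$ and $B_{i+1,i}=tA_{i,i+1}+(1{-}t)A_{i+1,i}$, so $B_{i,i+1}\,B_{i+1,i} = t(1{-}t)\,c_i^2$; hence the two off-diagonal entries at position $(i,i{+}1)$ both vanish when $c_i=0$ and are both strictly positive for $t\in(0,1)$ when $c_i>0$. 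Consequently $\B(t)$ is block-diagonal, each irreducible block being a hollow tridiagonal matrix with all sub- and super-diagonal entries strictly positive, with the blocks separated by indices where $c_i=0$.

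Within each irreducible block $\B_k(t)$ I would construct a diagonal similarity $\D_k^{-1}\B_k(t)\D_k = (1{-}t)\bm{W}_k + t\bm{W}_k\tr$, where $\bm{W}_k$ is the positive weighted shift matrix carrying the weights $c_i$ on its super-diagonal. The recipe is $d_1 := 1$ together with $d_{i+1}/d_i := 1$ when $A_{i,i+1}>0$ (the orientation is already upper), while $d_{i+1}/d_i := (1{-}t)/t$ when instead $A_{i+1,i}>0$ (swapping the orientation); a one-line check confirms that this rescaling sends $B_{i,i+1}$ to $(1{-}t)c_i$ and $B_{i+1,i}$ to $t\,c_i$, as desired. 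Taking the direct sum over blocks, $\B(t)$ is similar to $(1{-}t)\bm{W}+t\bm{W}\tr$ with $\bm{W}:=\bigoplus_k \bm{W}_k$, itself a nonnegative weighted shift matrix on $\{1,\ldots,n\}$ (with weight $c_i$ at position $(i,i{+}1)$ and $0$ at the separating positions).

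The hypothesis that some $A_{i,i+1}>0$ forces $\bm{W}$ to have at least one positive weight, so Theorem~\ref{Theorem:Shift} applies and gives strict concavity of $\spr((1{-}t)\bm{W}+t\bm{W}\tr)$, which by the similarity equals $\spr(\B(t))$ for every $t\in(0,1)$. The only real subtlety I anticipate is that the scaling matrix $\D_k$ depends on $t$, so we do not obtain a single $t$-independent similarity relating the two homotopies; but we never need one, since concavity is an equation-by-equation property of the function $t\mapsto\spr(\B(t))$, and the pointwise spectral equality is enough to transfer strict concavity from Theorem~\ref{Theorem:Shift} to $\spr(\B(t))$.
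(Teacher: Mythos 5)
Your argument is correct, and it reaches the same reduction to Theorem \ref{Theorem:Shift} as the paper does, but by a genuinely different mechanism. The paper's proof observes that in the recurrence \eqref{eq:rec} for the characteristic polynomial of a hollow tridiagonal matrix, the off-diagonal entries enter only through the pair-products; since the hypothesis $A_{i,i+1}A_{i+1,i}=0$ makes each product $B_{i,i+1}B_{i+1,i}$ equal to $t(1-t)c_i^2$ whichever side of the diagonal carries the weight $c_i$, the determinant $\det(\lambda\I - \B(t))$ is literally unchanged by moving every weight to the superdiagonal, so $\B(t)$ and the Levinger homotopy of the associated shift matrix have identical spectra. You instead realize the same spectral identity by an explicit $t$-dependent diagonal conjugation with ratios $d_{i+1}/d_i \in \{1,\ (1-t)/t\}$ --- a correct computation, valid for $t\in(0,1)$, which is all the corollary requires. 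The two routes rest on the same underlying fact (only the products of symmetric off-diagonal pairs matter spectrally for tridiagonal matrices), but yours is self-contained and constructive, whereas the paper's leans on the machinery already built in Lemma \ref{Lemma:ab}; your closing remark that the $t$-dependence of the similarity is harmless, because only the pointwise equality of spectral radii is needed to transfer concavity, is exactly the right point to make explicit. One cosmetic simplification: the block decomposition is not needed, since the same diagonal similarity can be defined globally on $\{1,\ldots,n\}$ by additionally setting $d_{i+1}/d_i=1$ at positions where $c_i=0$, though organizing it by irreducible blocks is of course also fine.
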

\begin{proof}
$\A$ is derived from a weighted shift matrix by swapping some elements of the superdiagonal $A_{i,i+1}$ to the transposed position in the subdiagonal, $A_{i+1,i}$.  The determinant of Levinger's homotopy $\det(\lambda \I - \B(t)) = \det(\lambda \I - (1-t) \A - t \A\tr)$ remains unchanged under such swapping because the term $\alpha \beta \, c_{k,k{-}1} \, c_{k{-}1,k}$ in \eqref{eq:rec}, which is $(1-t)t \, c_{k{-}1,k}^2$ in the weighted shift matrix, remains invariant under swapping as $t (1-t) \, c_{k,k{-}1}^2$.
\end{proof}

We complete the connection to positive weighted circuit matrices with this corollary.
\begin{Corollary}
By setting one or more, but not all, of the weights in a positive weighted circuit matrix to $0$,  Levinger's function becomes strictly concave.
\end{Corollary}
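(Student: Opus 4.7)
The plan is to reduce the positive weighted circuit matrix with a proper subset of weights zeroed out to a direct sum of non-cyclic weighted shift matrices via a permutation similarity, and then to apply the eigenvalue formula from Lemma~\ref{Lemma:ab} (which powered Theorem~\ref{Theorem:Shift}) to show that the entire spectrum of Levinger's homotopy is proportional to $\sqrt{t(1-t)}$.

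First I would use the canonical permutation invoked in the definition of a weighted circuit matrix to reduce to the cyclic weighted shift form with positive weights $c_1, \ldots, c_n$; this similarity commutes with transpose and with convex combinations, so it preserves Levinger's function. Setting the weights indexed by a nonempty proper subset $S \subsetneq \{1, \ldots, n\}$ to zero cuts the underlying directed cycle $1 \to 2 \to \cdots \to n \to 1$ into $|S|$ consecutive arcs. A further permutation that lists the indices arc by arc block-diagonalizes the matrix as $\A_1 \oplus \cdots \oplus \A_{|S|}$, where each $\A_j$ is a non-cyclic weighted shift matrix carrying the (strictly positive) original weights of its arc, with single-vertex arcs giving trivial $1 \times 1$ zero blocks.

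Second, Levinger's homotopy commutes with direct sums, so $\B(t) = \bigoplus_j \B_j(t)$ with $\B_j(t) = (1{-}t)\A_j + t\A_j\tr$. Applying Lemma~\ref{Lemma:ab} with $\alpha = 1-t$ and $\beta = t$ to each nontrivial block shows its eigenvalues have the form $\sqrt{t(1-t)}\, f_{j,h}(\cv_j)$, while the trivial blocks contribute only $0$. Hence every eigenvalue of $\B(t)$ equals $\sqrt{t(1-t)}$ times a constant depending only on the weights. Because $S$ is a proper subset, at least one arc retains a positive weight, so by Theorem~\ref{Theorem:Shift} its spectral radius is strictly positive for $t \in (0,1)$, giving $r(t) = C \sqrt{t(1-t)}$ with $C > 0$, which is strictly concave on $(0,1)$. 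I do not anticipate a real obstacle; the only care needed is the bookkeeping of arc lengths when several adjacent weights are simultaneously zeroed, producing isolated vertices and trivial $1 \times 1$ blocks, but these contribute only the eigenvalue $0$ and hence do not affect the spectral radius.
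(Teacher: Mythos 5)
Your proof is correct and follows essentially the same route as the paper: the paper's own proof simply observes that, after a suitable permutation of indices, a positive weighted circuit matrix with some but not all weights set to $0$ becomes a nonnegative weighted shift matrix with at least one positive weight, and then invokes Theorem \ref{Theorem:Shift} wholesale. Your explicit decomposition into a direct sum of arcs and re-application of Lemma \ref{Lemma:ab} merely unpacks what the proof of Theorem \ref{Theorem:Shift} already handles internally (it allows zero weights and notes the resulting direct-sum structure), so the extra bookkeeping, while sound, is not needed.
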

\begin{proof}
A positive weighted circuit matrix where some but not all of the positive weights are changed to $0$ is, under appropriate permutation of the indices, a nonnegative weighted shift matrix to which Theorem \ref{Theorem:Shift} applies.
\end{proof}

What kind of transition does Levinger's function make during the transition from a cyclic weighted shift matrix with nonconcave Levinger's function to a weighted shift matrix with its necessarily  concave Levinger's function, as one of the weights is lowered to $0$?  
Does the convexity observed in Figure \ref{fig:CyclicShift16} at the boundaries $t=0$ and $t=1$ flatten and become strictly concave for some positive value of that weight?  
We examine this transition for the cyclic shift matrix in example \eqref{eq:CyclicShift16} (Figure \ref{fig:CyclicShift16}).  The minimal weight is $c_{12} = 16 + \sin\left(2 \pi \frac{12}{16}\right) = 15$.  Figure \ref{fig:C12} plots Levinger's function as $c_{12}$ is divided by factors of $2^{8}$. 

Figure \ref{fig:Shift-Matrix_Limit} plots the second derivatives of Levinger's function.  We observe non-uniform convergence to the $c_{12}=0$ curve.  As $c_{12}$ decreases, the second derivative converges to the $c_{12}=0$ curve over wider and wider intervals of $t$, but outside of these intervals the second derivative \emph{diverges} from the $c_{12}=0$ curve, attaining larger values near and at the boundaries $t=0$ and $t=1$ with smaller $c_{12}$.  Meanwhile for $c_{12}=0$, Levinger's function is proportional to $\sqrt{t(1-t)}$, the second derivative of which goes to $-\infty$ as $t$ goes to $0$ or $1$.  When $c_{12}> 0$, $\B(0)$ and $\B(1)$ are irreducible, and when $c_{12}=0$, $\B(t)$ is irreducible for $t \in (0,1)$.  But for $c_{12}=0$, $\B(0)$ and $\B(1)$ are reducible matrices.   While the eigenvalues are always continuous functions of the elements of the matrix, the derivatives of the spectral radius need not be, and in this case, we see an unusual example of nonuniform convergence in the second derivative of the spectral radius.

\begin{figure}
\centerline{\includegraphics[width=0.6\textwidth]{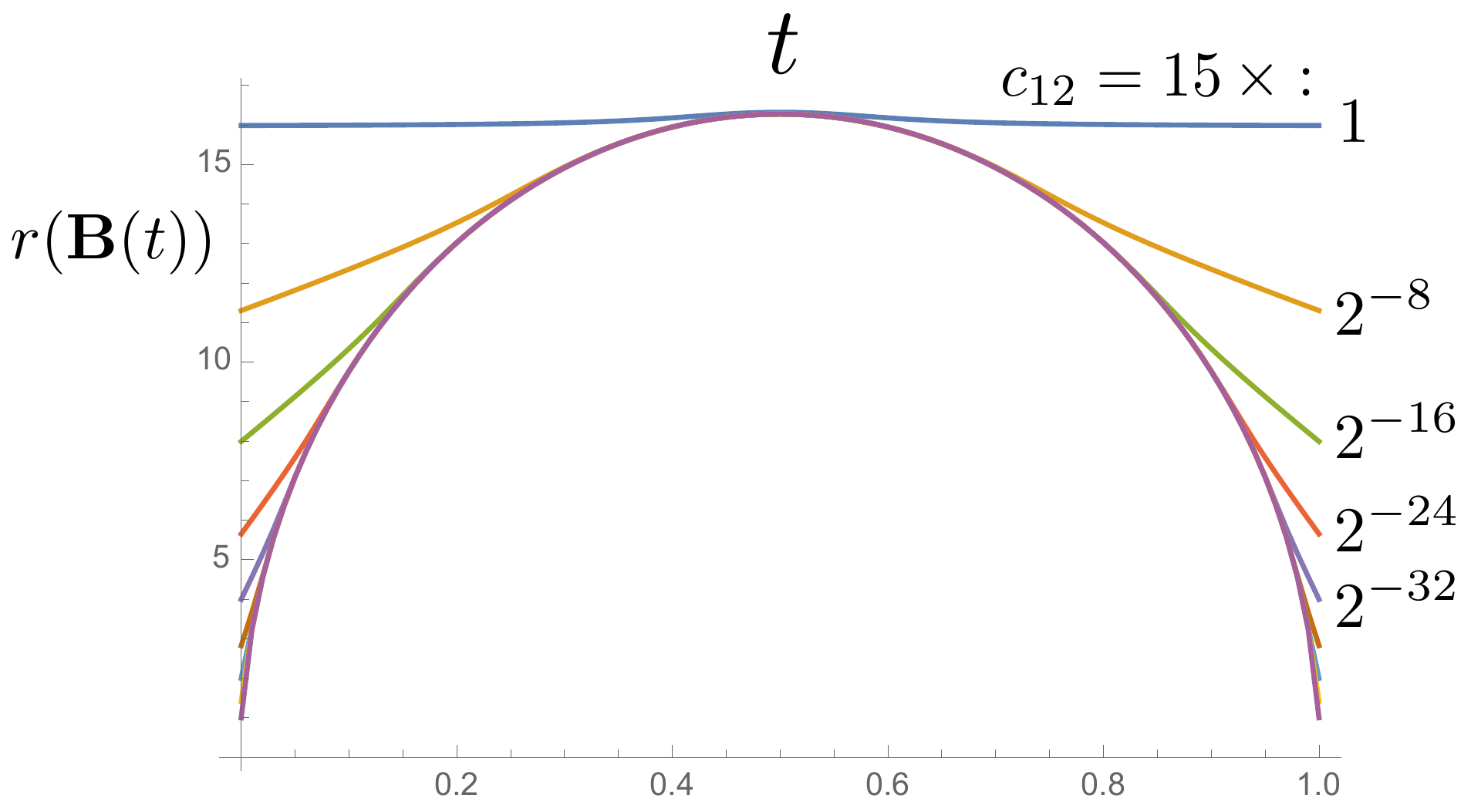}}
\caption{Levinger's function for the cyclic weighted shift matrix from \eqref{eq:CyclicShift16} in the limit as weight $c_{12}$ goes toward $0$ by being multiplied by successive powers of $2^{-8}$.  The topmost line with $c_{12} = 15 \times 1$ is the same as the curve in Figure \ref{fig:CyclicShift16} but with an expanded Y-axis.}\label{fig:C12}
\end{figure}

\begin{figure}
\centerline{\includegraphics[width=0.75\textwidth]{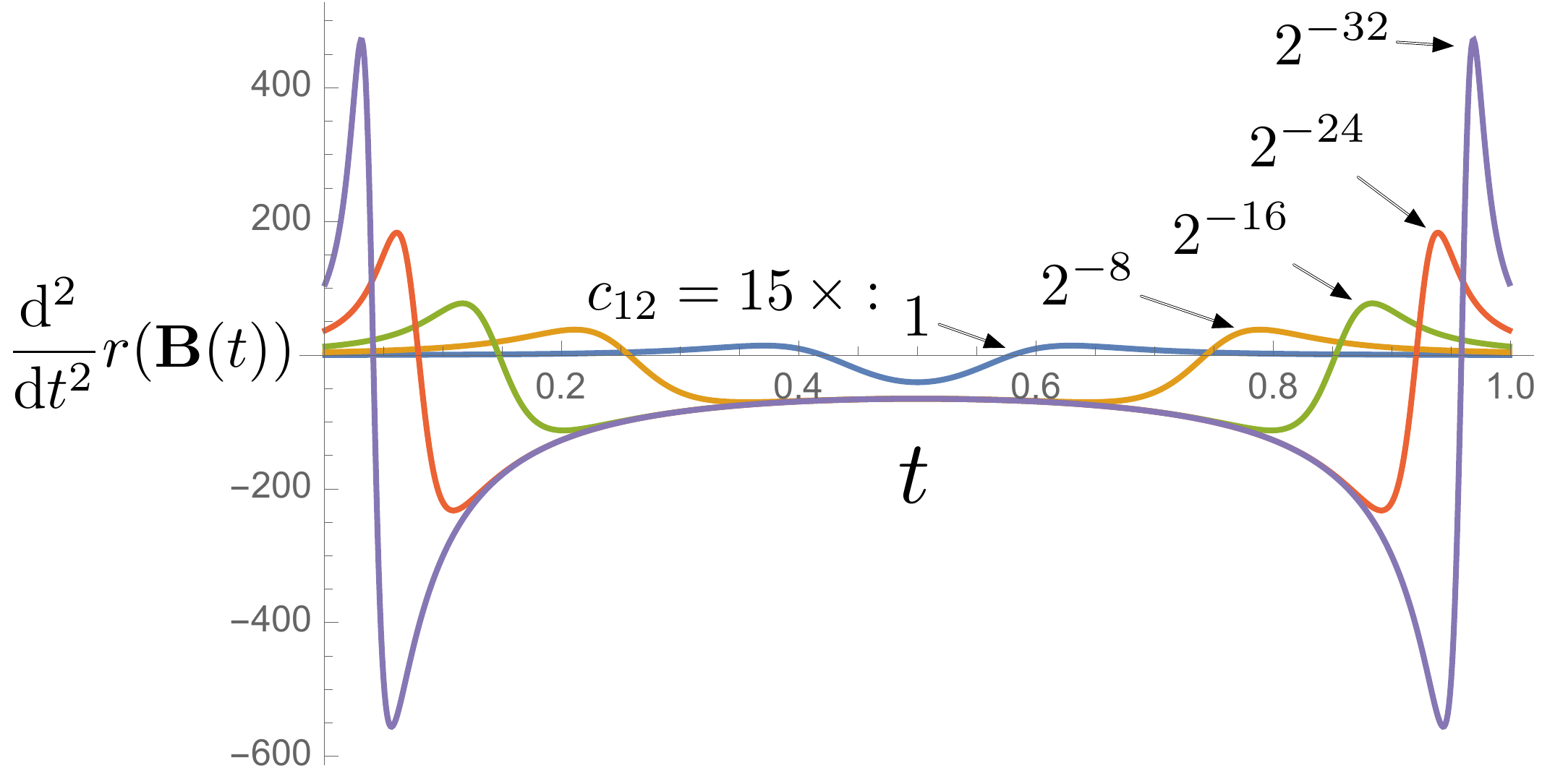}}
\caption {The second derivative of Levinger's function for the cyclic weighted shift matrix from \eqref{eq:CyclicShift16} as weight $c_{12}$ goes toward $0$ by being multiplied by successive powers of $2^{-8}$.}\label{fig:Shift-Matrix_Limit}
\end{figure}

\section{Matrices with Constant Levinger's Function}

Bapat \cite{Bapat:1987:Two} and Fiedler \cite{Fiedler:1995:Numerical} identified matrices with colinear left and right Perron vectors as having constant Levinger's function.  Here we make explicit a property implied by this constraint that appears not to have been described.  We use the centered representation of Levinger's homotopy.  The \emph{symmetric} part of a square matrix $\A$ is 
\an{
\S(\A) \eqdef (\A + \A\tr) / 2 .	\label{eq:SymPart}
}
The \emph{skew symmetric} part of $\A$ is 
\an{
\K(\A) \eqdef (\A - \A\tr)/2 .	\label{eq:SkewPart}
}
Then $\A = \S(\A) + \K(\A)$.   Levinger's homotopy in this centered representation is now, suppressing the $\A$ argument,
\ab{
\C(p) &\eqdef \S + p \, \K, \qquad p \in [-1, 1], 
\stext{and Levinger's function is}
c(p) &\eqdef r((p+1)/2)
= \spr(\S + p \,  \K).
}
The range of $p$ in this centered representation may be extended beyond $[-1,1]$, while maintaining $\C(p) \geq \0$, to the interval $p \in [-\alpha, \alpha]$ where 
\ab{
\alpha = \min_{i,j} \frac{A_{ij} + A_{ji}}{| A_{ji} - A_{ij} | } \geq 1.
}

\begin{Theorem}\label{Theorem:SK}
Let $\A \in \Reals^{n \times n}$ be irreducible and nonnegative.  Then $\spr((1{-}t) \A + t \A\tr)$ is constant in $t \in [0,1]$ if and only if the Perron vector of $\A + \A\tr$ is in the null space of $\A - \A\tr$.
\end{Theorem}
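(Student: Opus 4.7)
The plan is to reduce the statement to Bapat's characterization that Levinger's function is constant if and only if $\A$ has colinear left and right Perron vectors, and then use the symmetric/skew-symmetric decomposition $\A = \S + \K$ to translate this colinearity condition into the two displayed conditions about $\A + \A\tr = 2\S$ and $\A - \A\tr = 2\K$.

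For the forward direction, I would assume $r(t)$ is constant on $[0,1]$. By Bapat's theorem cited in the introduction, the right Perron vector $\vv > 0$ of $\A$ and the right Perron vector $\w > 0$ of $\A\tr$ (i.e., the left Perron vector of $\A$) can be taken equal, so $\A \vv = r \vv$ and $\A\tr \vv = r \vv$ where $r = r(\A)$. Adding and subtracting these yields $(\A + \A\tr)\vv = 2 r \vv$ and $(\A - \A\tr)\vv = \0$. Since $\A + \A\tr$ is nonnegative and irreducible (because $\A$ is) and $\vv > 0$, the positive eigenvector $\vv$ must be the Perron vector of $\A + \A\tr$ by Perron–Frobenius uniqueness, and the second equation says $\vv \in \ker(\A - \A\tr)$.

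For the reverse direction, let $\vv > 0$ be the Perron vector of $\A + \A\tr$ with Perron eigenvalue $\mu > 0$, so $(\A + \A\tr)\vv = \mu \vv$. The hypothesis $\vv \in \ker(\A - \A\tr)$ gives $\A\vv = \A\tr\vv$, and combined with the previous identity we get $\A\vv = \A\tr\vv = (\mu/2)\vv$. Because $\A$ is irreducible and nonnegative and $\vv$ is a strictly positive eigenvector, Perron–Frobenius forces $\mu/2$ to be the Perron root $r(\A)$ and $\vv$ to be the (essentially unique) right Perron vector of $\A$; the same argument applied to $\A\tr$ shows $\vv$ is also the right Perron vector of $\A\tr$, i.e., the left Perron vector of $\A$. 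Thus the normalized left and right Perron vectors of $\A$ coincide, and Bapat's characterization gives constant $r(t)$.

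The only subtlety worth flagging is making sure the Perron-Frobenius ``converse'' is invoked cleanly: a positive eigenvector of an irreducible nonnegative matrix must correspond to the Perron root and must be (up to scaling) the Perron vector. This keeps the equivalence tight and avoids any ambiguity about which eigenvalue $\mu/2$ represents. No delicate estimates or analytic arguments are needed; the entire proof is a short linear-algebra manipulation anchored on Bapat's theorem and the uniqueness statement of Perron–Frobenius.
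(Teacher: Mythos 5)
Your proposal is correct and follows essentially the same route as the paper: reduce to the Bapat--Fiedler characterization via colinear left and right Perron vectors, then add and subtract the eigenvector equations to translate between that condition and the statements about $\A + \A\tr$ and $\A - \A\tr$. Your explicit invocation of the Perron--Frobenius converse (a positive eigenvector of an irreducible nonnegative matrix must belong to the Perron root) is a welcome small tightening of a step the paper leaves implicit, but it is not a different argument.
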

\begin{proof}
\citet{Bapat:1987:Two} and \citet{Fiedler:1995:Numerical} proved that $\spr((1{-}t) \A + t \A\tr)$ is constant in $t \in [0,1]$ if and only if the left and right Perron vectors of $\A$ are colinear.  Suppose the left and right Perron vectors of $\A$ are colinear.  Without loss of generality, they can be normalized to sum to $1$ in which case they are identical.  Let the left and right Perron vectors of $\A$ be $\x$.  
Then
\ab{
\frac{1}{2} (\A + \A\tr) \x &= \spr(\A) \ \x,
\stext{and}
(\A - \A\tr) \x &= \spr(\A)\ (\x - \x) = \0.
}
Hence $\x$ is the Perron vector of $\A + \A\tr$ and $\x > \0$ is in the null space of $\A - \A\tr$.

For the converse, let the Perron vector of $\A + \A\tr$ be $\x > \0$, and let $\x$ be in the null space of $\A - \A\tr$.  Then
\ab{
(\A + \A\tr ) \x &= \spr(\A{+}\A\tr)\ \x
\text{ and }
(\A - \A\tr ) \x = \A \x - \A\tr \x = \0,
}
{which gives}
\ab{
\A \x &
= \frac{1}{2}[(\A + \A\tr ) +( \A - \A\tr)] \x 
= \frac{1}{2}\spr(\A{+}\A\tr)\  \x + \0
= \frac{\spr(\A{+}\A\tr)}{2} \ \x
\stext{and}
\A\tr \x &
= \frac{1}{2}[(\A + \A\tr ) - ( \A - \A\tr)] \x 
= \frac{1}{2}\spr(\A{+}\A\tr)\  \x - \0
= \frac{\spr(\A{+}\A\tr)}{2}\ \x
}
hence $\x$ is a Perron vector of $\A$ and of $\A\tr$.
\end{proof}

\begin{Corollary} 
Let $\S = \S\tr \in \Reals^{n\times n}$ be a nonnegative irreducible symmetric matrix, and $\K = - \K\tr \in \Reals^{n\times n}$ be a nonsingular skew symmetric matrix such that $\A= \S + \K \geq \0$.  Then $n$ is even and $\A$ has a non-constant Levinger's function.
\end{Corollary}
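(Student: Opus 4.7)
The plan is to handle the two conclusions separately, each by a short argument that leans on an elementary linear-algebraic fact.

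For the parity of $n$, I would invoke the classical identity
\ab{
\det(\K) = \det(\K\tr) = \det(-\K) = (-1)^n \det(\K),
}
valid for any skew symmetric $\K$. If $n$ were odd, this would force $\det(\K)=0$, contradicting nonsingularity of $\K$. Hence $n$ must be even.

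For non-constancy of Levinger's function, I would use the decomposition
\ab{
\A + \A\tr = 2\S, \qquad \A - \A\tr = 2\K,
}
and apply Theorem \ref{Theorem:SK}, which reduces constancy of $r(t)$ to the condition that the Perron vector of $\A + \A\tr$ lie in the null space of $\A - \A\tr$. Since $\S$ is nonnegative and irreducible, Perron--Frobenius supplies a strictly positive, and therefore nonzero, Perron vector $\x>\0$ for $2\S$. But $\K$ is nonsingular, so $\ker(\K)=\set{\0}$, and $\x$ cannot lie there. Theorem \ref{Theorem:SK} then rules out constancy, so Levinger's function of $\A$ is non-constant.

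The main subtlety I anticipate is that Theorem \ref{Theorem:SK} is stated for irreducible $\A$, while the hypotheses of the corollary only make $\S$ irreducible; $\A = \S + \K$ need not be irreducible on its own (e.g., a positive entry of $\S$ could correspond to a one-way arc in $\A$). I would deal with this either by reading irreducibility of $\A$ into the statement (the intended setting), or by a perturbation/continuity argument: replace $\A$ by $\A_\varepsilon = \A + \varepsilon\,\S$, which is nonnegative, shares the same symmetric and skew symmetric parts (up to a scalar for $\S$) with positive entries wherever $\S$ is positive, and hence is irreducible. For $\A_\varepsilon$ the argument above applies verbatim, and continuity of $r(t)$ in the matrix entries \citep[2.4.9]{Horn:and:Johnson:2013} together with the strict inequality obstructing constancy for $\varepsilon>0$ should transfer the conclusion to $\varepsilon=0$. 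This is the only place I expect any real work; the rest is immediate.
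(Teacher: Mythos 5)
Your argument is essentially the paper's: the parity of $n$ comes from the singularity of odd-order skew symmetric matrices (the paper cites this fact from a handbook rather than rederiving it via $\det(\K)=\det(\K\tr)=\det(-\K)=(-1)^n\det(\K)$, but that is the standard proof), and non-constancy comes from Theorem \ref{Theorem:SK}, because the Perron vector of $\A+\A\tr=2\S$ is strictly positive while $\ker(\K)=\set{\0}$. Your worry about irreducibility of $\A$ is a fair catch: Theorem \ref{Theorem:SK} assumes $\A$ irreducible, the corollary only assumes $\S$ irreducible, and $\A=\S+\K$ can indeed be reducible --- e.g.\ $\S=\left(\begin{smallmatrix}0&1\\1&0\end{smallmatrix}\right)$, $\K=\left(\begin{smallmatrix}0&1\\-1&0\end{smallmatrix}\right)$ give $\A=\left(\begin{smallmatrix}0&2\\0&0\end{smallmatrix}\right)$ --- and the paper's own one-line proof passes over this silently. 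However, your proposed perturbation patch does not close the gap as stated: knowing that $\A_\varepsilon=\A+\varepsilon\S$ has a non-constant Levinger's function for every $\varepsilon>0$ does not, by continuity of $r(t)$ alone, force non-constancy at $\varepsilon=0$, since a family of non-constant functions can converge uniformly to a constant; you would need a quantitative lower bound on the variation of $r(t)$ that is uniform in $\varepsilon$. The honest resolution is your first option --- read irreducibility of $\A$ into the hypotheses, which is plainly the intended setting and is exactly what the paper's proof implicitly does.
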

\begin{proof}
If $\K$ is a nonsingular skew symmetric matrix, $n$ must be even, since odd-order skew symmetric matrices are always singular \citep[2-9.27]{Hogben:2014:Handbook}.  If $\C(p) := \S + p \, \K$ with $\K$ nonsingular, then because the null space of $\K$ is $\{\0\}$, $\C(p)$ must have a non-constant Levinger's function $c(p)$ by Theorem \ref{Theorem:SK}.
\end{proof}

The following corollary pursues the observation made by an anonymous reviewer that a matrix $\A$ with colinear left and right Perron vectors is orthogonally similar to a direct sum $\Pmatr{\spr(\A)} \oplus \F$ for some square matrix $\F$.  This entails that the skew symmetric part of $\A$ is orthogonally similar to $\Pmatr{\spr(\A) - \spr(\A)} \oplus (\F-\F\tr) / 2 = \Pmatr{0} \oplus (\F-\F\tr) / 2$, and is thus singular.
\begin{Corollary} 
Let $\S = \S\tr \in \Reals^{n\times n}$ be a nonnegative irreducible symmetric matrix, and $\K = - \K\tr \in \Reals^{n\times n}$ be a skew symmetric matrix, such that $\A = \S + \K \geq \0$.  Let $\Q = (\Q\tr)^{-1}$ be an orthogonal matrix that diagonalizes $\S$ to
\ab{
\Lam &\eqdef \Q\tr \S \Q = \Pmatr
{\spr(\S) & 0 & \cdots & 0\\
0 & \lambda_2 & \cdots & 0 \\
\vdots & \vdots & \ddots & \vdots \\
0 & 0 & \cdots & \lambda_n
} .
}
Then $\A$ has a constant Levinger's function if and only if
\an{\label{eq:KQP}
 \K_1 &\eqdef \Q\tr\K \Q = \Pmatr
{0 & \0\tr \\
\0 &\K_2
} = \Pmatr{0} \oplus \K_2,
}
where $\K_2 = - \K_2\tr \in \Reals^ {n{-}1 \times n{-}1} $ and $\0\tr = (0 \ldots 0) \in \Reals^{n{-}1}$.
\end{Corollary}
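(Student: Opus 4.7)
The plan is to reduce the equivalence directly to Theorem \ref{Theorem:SK} by moving the null-space condition through the orthogonal conjugation by $\Q$.

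First I would translate the hypothesis of Theorem \ref{Theorem:SK} into the current notation. Since $\A + \A\tr = 2\S$ and $\A - \A\tr = 2\K$, that theorem says that Levinger's function is constant in $t$ if and only if the Perron vector $\x$ of $\S$ lies in $\ker(\K)$. So the task reduces to showing that $\K\x = \0$ is equivalent to the block form \eqref{eq:KQP}.

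Next I would identify $\x$ with $\Q\ev_1$. Rearranging $\Q\tr \S \Q = \Lam$ gives $\S\Q = \Q\Lam$, so the first column $\Q\ev_1$ is a unit eigenvector of $\S$ with eigenvalue $\spr(\S)$. Since $\S$ is nonnegative and irreducible, Perron--Frobenius makes this eigenspace one-dimensional and spanned by a positive vector, so, after fixing the sign of the first column of $\Q$, we may take $\Q\ev_1 = \x/\|\x\|$. Hence $\K\x = \0$ is equivalent to $\K\Q\ev_1 = \0$, and premultiplying by the invertible matrix $\Q\tr$, to $\K_1\ev_1 = \0$ --- that is, the first column of $\K_1$ vanishes.

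The final step exploits the skew-symmetry of $\K_1$, which is inherited from $\K$ under orthogonal conjugation: $(\Q\tr \K \Q)\tr = \Q\tr\K\tr\Q = -\Q\tr\K\Q$. Thus the first row of $\K_1$ is the negative of its first column, and both vanish simultaneously, giving exactly the decomposition $\K_1 = [0] \oplus \K_2$ with a skew-symmetric block $\K_2 \in \Reals^{(n-1)\times(n-1)}$. For the converse, the block form immediately gives $\K_1\ev_1 = \0$, and therefore $\K\x \propto \K\Q\ev_1 = \Q\K_1\ev_1 = \0$, so the Perron vector of $\S$ lies in $\ker(\K)$ and Theorem \ref{Theorem:SK} concludes constancy. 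No step is a serious obstacle; the only delicate point is invoking Perron--Frobenius to pin down $\Q\ev_1$ as the Perron eigenvector of $\S$ rather than merely some unit eigenvector with eigenvalue $\spr(\S)$, which is needed so that the condition $\K_1\ev_1 = \0$ in the transformed basis faithfully encodes the condition of Theorem \ref{Theorem:SK}.
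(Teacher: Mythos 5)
Your proposal is correct and follows essentially the same route as the paper: reduce to Theorem \ref{Theorem:SK}, identify the Perron vector of $\S$ with the first column of $\Q$ (using Perron--Frobenius simplicity), conjugate the null-space condition by $\Q$ to get a vanishing first column/row of $\K_1$, and invoke skew-symmetry to obtain the block form. The only cosmetic difference is that the paper works with the left condition $\x\tr\K = \0\tr$ (vanishing first row) while you work with $\K\x = \0$ (vanishing first column); skew-symmetry makes these interchangeable, and both proofs use it to conclude.
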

\begin{proof}
Since $\S$ is real and symmetric, $\S = \Q \Lam \Q\tr$ is in Jordan canonical form.  Let $\x > \0$ be the normalized Perron vector of $\S$.  Then $\x = [\Q]_1$ is the first column of $\Q$, and the other columns of $\Q$ are orthogonal to $\x$, so $\x\tr \Q = (1\:0 \cdots 0)$.  The necessary and sufficient condition from Theorem \ref{Theorem:SK} for $\A$ to have constant Levinger's function is that $\x\tr\K = \0\tr$, equivalent to
\ab{
\x\tr \K &
= \x\tr \Q\K_1\Q\tr
= (1\: 0 \cdots 0) \K_1\Q\tr
= \0\tr.
}
Since $\Q$ is orthogonal, it has null space $\{\0\}$, so $ (1\: 0 \cdots 0) \K_1\Q\tr = \0\tr$ if and only if $ (1\: 0 \cdots 0) \K_1 = \0\tr$, which is the top row of $\K_1$.  $\K_1$ and $\K_2$ must be skew symmetric since $\K$ is skew symmetric, as can be seen immediately from transposition.  The skew symmetry of $\K_1$ implies its first column must also be all zeros as its first row is, establishing the form given in \eqref{eq:KQP}.
\end{proof} 

\section{Conclusions}

We have shown that it is not in general true that the spectral radius along a line from a nonnegative square matrix $\A$ to its transpose --- Levinger's function --- is concave.  Our counterexamples to concavity have a simple principle in the case of direct sums of block matrices, namely, that the maximum of two concave functions need not be concave.  However, for the other examples we present --- Toeplitz matrices, and positive circuit or cyclic weighted shift matrices --- whatever principles underly the nonconcavity remain to be discerned.  Also remaining to be discerned are the properties of matrix families --- a few of which we have presented here --- that guarantee concave Levinger functions.  A general characterization of the range of $t$ for which the spectral radius is concave in Levinger's homotopy remains an open problem.

\section*{Biographical Note}

Bernard W. Levinger (Berlin, Germany, September 3, 1928 -- Fort Collins, Colorado, USA, January 17, 2020) and his family fled Nazi Germany to England in 1936, to Mexico in 1940, and to the United States in 1941, which initially placed them in an immigration prison and deported them to Mexico, but which ultimately allowed their immigration, whereupon they settled in New York City.  Levinger graduated from Bronx High School of Science and earned a doctorate in mathematics from New York University. He was Professor of Mathematics and Professor Emeritus at Colorado State University, Fort Collins. He leaves a large family, including his wife Lory of more than 65 years.\citep{Levinger:1928-2020}

\section*{Acknowledgements}
L.A. thanks Marcus W. Feldman for support from the Stanford Center for Computational, Evolutionary and Human Genomics and the Morrison Institute for Population and Resources Studies, Stanford University; the Mathematical Biosciences Institute at The Ohio State University, for its support through U.S. National Science Foundation awards DMS-0931642 and DMS-1839810, ``A Summit on New Interdisciplinary Research Directions on the Rules of Life'';  and the Foundational Questions Institute and Fetzer Franklin Fund, a donor advised fund of Silicon Valley Community Foundation, for FQXi Grant number FQXi-RFP-IPW-1913.  J.E.C. thanks the U.S. National Science Foundation for grant DMS-1225529 during the initial phase of this work and Roseanne K. Benjamin for help during this work.

\bibliographystyle{model2-names.bst}

\begin{thebibliography}{14}
\expandafter\ifx\csname natexlab\endcsname\relax\def\natexlab#1{#1}\fi
\providecommand{\url}[1]{\texttt{#1}}
\providecommand{\href}[2]{#2}
\providecommand{\path}[1]{#1}
\providecommand{\DOIprefix}{doi:}
\providecommand{\ArXivprefix}{arXiv:}
\providecommand{\URLprefix}{URL: }
\providecommand{\Pubmedprefix}{pmid:}
\providecommand{\doi}[1]{\href{http://dx.doi.org/#1}{\path{#1}}}
\providecommand{\Pubmed}[1]{\href{pmid:#1}{\path{#1}}}
\providecommand{\bibinfo}[2]{#2}
\ifx\xfnm\relax \def\xfnm[#1]{\unskip,\space#1}\fi
\bibitem[{Bapat(1987)}]{Bapat:1987:Two}
\bibinfo{author}{Bapat, R.B.}, \bibinfo{year}{1987}.
\newblock \bibinfo{title}{Two inequalities for the {Perron} root}.
\newblock \bibinfo{journal}{Linear Algebra and Its Applications}
  \bibinfo{volume}{85}, \bibinfo{pages}{241--248}.
\bibitem[{Bapat and Raghavan(1997)}]{Bapat:and:Raghavan:1997}
\bibinfo{author}{Bapat, R.B.}, \bibinfo{author}{Raghavan, T.E.S.},
  \bibinfo{year}{1997}.
\newblock \bibinfo{title}{Nonnegative Matrices and Applications}.
\newblock \bibinfo{publisher}{Cambridge University Press},
  \bibinfo{address}{Cambridge, UK}.
\bibitem[{B\"ottcher and Grudsky(2005)}]{Bottcher:and:Grudsky:2005:Spectral}
\bibinfo{author}{B\"ottcher, A.}, \bibinfo{author}{Grudsky, S.M.},
  \bibinfo{year}{2005}.
\newblock \bibinfo{title}{Spectral Properties of Banded Toeplitz Matrices}.
\newblock \bibinfo{publisher}{Society for Industrial and Applied Mathematics},
  \bibinfo{address}{Philadelphia, PA}.
\newblock \DOIprefix\doi{10.1137/1.9780898717853}.
\bibitem[{Chien and Nakazato(2020)}]{Chien:and:Nakazato:2020:Symmetry}
\bibinfo{author}{Chien, M.T.}, \bibinfo{author}{Nakazato, H.},
  \bibinfo{year}{2020}.
\newblock \bibinfo{title}{Symmetry of cyclic weighted shift matrices with
  pivot-reversible weights}.
\newblock \bibinfo{journal}{The Electronic Journal of Linear Algebra}
  \bibinfo{volume}{36}, \bibinfo{pages}{47--54}.
\bibitem[{Fiedler(1995)}]{Fiedler:1995:Numerical}
\bibinfo{author}{Fiedler, M.}, \bibinfo{year}{1995}.
\newblock \bibinfo{title}{Numerical range of matrices and {Levinger}'s
  theorem}.
\newblock \bibinfo{journal}{Linear Algebra and Its Applications}
  \bibinfo{volume}{220}, \bibinfo{pages}{171--180}.
\bibitem[{Hogben(2014)}]{Hogben:2014:Handbook}
\bibinfo{editor}{Hogben, L.} (Ed.), \bibinfo{year}{2014}.
\newblock \bibinfo{title}{Handbook of Linear Algebra}.
\newblock \bibinfo{edition}{2nd} ed., \bibinfo{publisher}{Chapman and Hall},
  \bibinfo{address}{Boca Raton, FL}.
\bibitem[{Horn and Johnson(2013)}]{Horn:and:Johnson:2013}
\bibinfo{author}{Horn, R.A.}, \bibinfo{author}{Johnson, C.R.},
  \bibinfo{year}{2013}.
\newblock \bibinfo{title}{Matrix Analysis}.
\newblock \bibinfo{edition}{2nd} ed., \bibinfo{publisher}{Cambridge University
  Press}, \bibinfo{address}{Cambridge}.
\bibitem[{Levinger(1970)}]{Levinger:1970:Inequality}
\bibinfo{author}{Levinger, B.W.}, \bibinfo{year}{1970}.
\newblock \bibinfo{title}{An inequality for nonnegative matrices}.
\newblock \bibinfo{journal}{Notices of the American Mathematical Society}
  \bibinfo{volume}{17}, \bibinfo{pages}{260}.
\newblock \URLprefix
  \url{http://www.ams.org/journals/notices/197001/197001FullIssue.pdf}.
\bibitem[{Marek(1974)}]{Marek:1974:Inequality}
\bibinfo{author}{Marek, I.}, \bibinfo{year}{1974}.
\newblock \bibinfo{title}{An inequality involving positive kernels}.
\newblock \bibinfo{journal}{{\v{C}}asopis pro P{\v{e}}stov{\'a}n{\'\i}
  Matematiky} \bibinfo{volume}{99}, \bibinfo{pages}{77--87}.
\bibitem[{Marek(1978)}]{Marek:1978:Perron}
\bibinfo{author}{Marek, I.}, \bibinfo{year}{1978}.
\newblock \bibinfo{title}{Perron root of a convex combination of a positive
  kernel and its adjoint}.
\newblock \bibinfo{journal}{Acta Universitatis Carolinae. Mathematica et
  Physica} \bibinfo{volume}{19}, \bibinfo{pages}{3--14}.
\bibitem[{Marek(1984)}]{Marek:1984:Perron}
\bibinfo{author}{Marek, I.}, \bibinfo{year}{1984}.
\newblock \bibinfo{title}{Perron roots of a convex combination of a cone
  preserving map and its adjoint}.
\newblock \bibinfo{journal}{Linear Algebra and Its Applications}
  \bibinfo{volume}{58}, \bibinfo{pages}{185--200}.
\bibitem[{Psarrakos and
  Tsatsomeros(2003)}]{Psarrakos:and:Tsatsomeros:2003:Perron}
\bibinfo{author}{Psarrakos, P.J.}, \bibinfo{author}{Tsatsomeros, M.J.},
  \bibinfo{year}{2003}.
\newblock \bibinfo{title}{The {Perron} eigenspace of nonnegative almost
  skew-symmetric matrices and {Levinger}'s transformation}.
\newblock \bibinfo{journal}{Linear Algebra and Its Applications}
  \bibinfo{volume}{360}, \bibinfo{pages}{43--57}.
\bibitem[{Stanczak et~al.(2009)Stanczak, Wiczanowski and
  Boche}]{Stanczak:Wiczanowski:and:Boche:2009:Fundamentals}
\bibinfo{author}{Stanczak, S.}, \bibinfo{author}{Wiczanowski, M.},
  \bibinfo{author}{Boche, H.}, \bibinfo{year}{2009}.
\newblock \bibinfo{title}{Fundamentals of Resource Allocation in Wireless
  Networks: Theory and Algorithms}. volume~\bibinfo{volume}{3}.
\newblock \bibinfo{publisher}{Springer Science \& Business Media}.
\bibitem[{Tsing et~al.(1994)Tsing, Fan and
  Verriest}]{Tsing:etal:1994:Analyticity}
\bibinfo{author}{Tsing, N.K.}, \bibinfo{author}{Fan, M.K.},
  \bibinfo{author}{Verriest, E.I.}, \bibinfo{year}{1994}.
\newblock \bibinfo{title}{On analyticity of functions involving eigenvalues}.
\newblock \bibinfo{journal}{Linear Algebra and Its Applications}
  \bibinfo{volume}{207}, \bibinfo{pages}{159--180}.
\bibitem[{15}]{Levinger:1928-2020}
\bibinfo{title}{Bernard {Levinger}, 1928--2020}.
\newblock \URLprefix
  \url{https://www.legacy.com/obituaries/coloradoan/obituary.aspx?n=bernard-levinger&pid=195155578}.

\end{thebibliography}

\end{document}